\newcommand{\keywords}[1]{\par\addvspace\baselineskip
\noindent\keywordname\enspace\ignorespaces#1}
\DeclarePairedDelimiter\ceil{\lceil}{\rceil}
\DeclarePairedDelimiter\floor{\lfloor}{\rfloor}
\newcommand{\m}[4]{\left( \begin{array}{ccc} #1 & \, & #2 \\ #3 & \, & #4 \end{array} \right)}
\newcommand{\mm}[9]{\left( \begin{array}{ccccc} 
#1 & \, & #2 \, & #3\\ 
#4 & \, & #5 \, & #6\\
#7 & \, & #8 \, & #9
\end{array} \right)}
\newcounter{commentcounter}
\begin{document}

\mainmatter  
\title{Pairwise Well-Formed Modes and Transformations}

\titlerunning{Pairwise Well-Formed Modes and Transformations}

\author{David Clampitt and Thomas Noll}
\authorrunning{David Clampitt and Thomas Noll}
\institute{Ohio State University, School of Music, USA \\
clampitt.4@osu.edu \vspace{9 pt} \\
Escola Superior de M\'{u}sica de Catalunya \\
Departament de Teoria, Composici\'{o} i Direcci\'{o}, Spain \\ thomas.mamuth@gmail.com}

\toctitle{Pairwise Well-Formed Modes and Transformations}

\maketitle

\begin{abstract}
One of the most significant attitudinal shifts in the history of music occurred in the Renaissance, when an emerging triadic consciousness moved musicians towards a new scalar formation that placed major thirds on a par with perfect fifths. In this paper we revisit the confrontation between the two idealized scalar and modal conceptions, that of the ancient and medieval world and that of the early modern world, associated especially with Zarlino. We do this at an abstract level, in the language of algebraic combinatorics on words. In scale theory the juxtaposition is between well-formed and pairwise well-formed scales and modes, expressed in terms of Christoffel words or standard words and their conjugates, and the special Sturmian morphisms that generate them. Pairwise well-formed scales are encoded by words over a three-letter alphabet, and in our generalization we introduce special positive automorphisms of $F3$, the free group over three letters.                 

\keywords{pairwise well-formed scales and modes, well-formed scales and modes, well-formed words, Christoffel words, standard words, central words, algebraic combinatorics on words, special Sturmian morphisms.}

\end{abstract}

\section{Introduction:  Authentic and Triadic  Modes}

Figure \ref{fig:Major} shows a C-major scale with two  different interpretations of its step interval pattern. In the annotation $aaba|aab$ (above the staff) the two letters $a$ and $b$ designate the major and minor steps, respectively. The vertical stroke $|$ designates the authentic divider of the mode into a species of the fifth $aaba$ and a species of a fourth $aab$. This pattern is called the \emph{Authentic division of the Ionian Mode}. In the annotation $ac|ba||cab$ (below the staff) the three letters $a, c$ and $b$ designate the greater and lesser major and the minor steps, respectively. Together they divide the major mode triadically into a species of the major third $ac$, a species of the minor third $ba$ and a species of the fourth $cab$. This pattern shall be called the \emph{Triadic Division of the Ionian Major Mode}.  
   
\begin{figure}
\begin{center}
\begin{minipage}{80 mm}
\resizebox*{8 cm}{!}
{\includegraphics{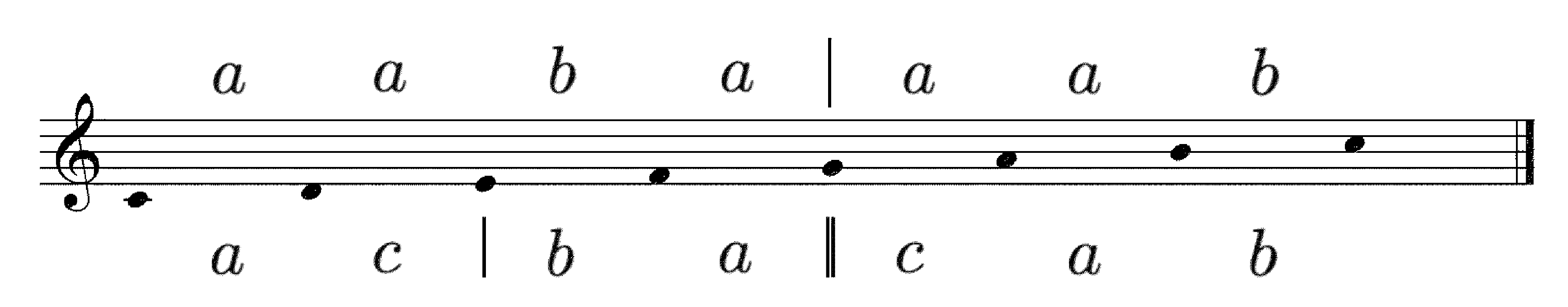}}%
\caption{Authenic Division of the Ionian and Triadic Division of the Ionian Major mode.}%
\label{fig:Major}
\end{minipage}
\end{center}
\end{figure}

The contrasting juxtaposition evokes several open questions of historical and systematic nature about the particular relevance of these modes for different types of music and music analysis. Within the discourse of mathematical music theory they point to the theory of well-formed scales and modes (\cite{CareyClampitt89}, \cite{CareyClampitt96a}, \cite{Noll2009}, \cite{ClampittNoll2011}), on the one hand, and to the theory of pairwise well-formed scales (\cite{Clampitt1997}, \cite{Clampitt2007}), on the other. In the present article we therefore extend some transformational innovations within the theory of well-formed modes in order to make them fruitful within a theory of  pairwise well-formed modes. These investigations eventually contribute to a deeper theoretical understanding of the juxtaposition in Fig. \ref{fig:Major}.

\section{Non-Singular Pairwise Well-Formed Modes}\label{PWWF}
In this section we revisit some important results from \cite{Clampitt1997} about the structure of non-singular pairwise well-formed scales and re-interpret them in a word-theoretic context.
The $3$-letter word $acbacab$ describes the species of the octave of the Ionian Major mode, and thereby it is the step interval pattern of a \emph{pairwise well-formed scale}. The motivation behind this concept is the following: The two-letter word $aabaaab$, describing the Ionian species of the octave can be obtained from $acbacab$ by an identification of the letter $c$ with the letter $a$: $\pi_{c \to a}(acbacab) = aabaaab$. In traditional music-theoretical terms this letter projection describes \emph{syntonic identification}, i.e., neglecting the difference between the greater and lesser major steps. There are two more such letter identifications, both of which lead to well-formed modes,  whence the term \emph{pairwise well-formed}. One of them is $\pi_{b \to c}(bacabac) = cacacac$. It describes an identification of the minor step with the lesser major step, i.e., neglecting their difference, what we may call the (harmonic) \emph{apotome}. This mode neutralizes also the difference between the major and minor thirds and can be seen as a modal refinement of the generic third-generated scale. The third letter projection $\pi_{a \to b}(bacabac) = bbcbbbc$ identifies the minor step with the greater major step. The neglected interval is the sum of the two previously mentioned ones, and so we can formally speak of an \emph{apo-syntonic identification}. It is arguable, though, whether this third projection bears a direct musical meaning. As an auxiliary construction it proves to be very useful on a theoretical level. This becomes clear in the course of the article. 

We will represent non-singular pairwise well-formed (PWWF) scales by words over a three-letter alphabet $A = \{a, b, c\}$, as in \cite{Clampitt2007}, and we will henceforth refer to PWWF words and drop the qualifier ``non-singular'' (the singular case is represented by the word $abacaba$ and its word-theoretical conjugates).  We denote the set of all PWWF words by $\mathfrak{A} \subset \{a, b, c\}^{\ast}$. From \cite{Clampitt1997}, \cite{ClampittNoll2011}, $v \in \mathfrak{A}$ if and only if each of the three projections $\pi_{x \to y}$ (described above) results in a well-formed word, i.e., the conjugate of a Christoffel word, equivalently, of a standard word (see \cite{Lothaire}, \cite{Berthe} for the relevant word theory background). For every word $v \in \mathfrak{A}$,  the length $|v|$ is odd, and the multiplicities of two of the letters are the same: $|v|_b = |v|_c$. It follows that $|v|_a$ is odd. In light of these facts, given a special standard word, we can always construct a PWWF word, by the \emph{bisecting substitution} defined below.

\begin{definition}
Consider a special standard morphism $f$ acting on the word monoid $\{a, c\}^\ast$ and consider the word $w = f(ac) = w_1w_2 \dots w_n$. We further suppose that $|w| = n$ is odd  and that $|w|_c$ is even. Then we define the \emph{bisection} of the word $w$ as $w_\prec = v = v_1v_2 \dots v_n \in \{a, b, c\}^\ast$ with $$v_k := \left \{ 
\begin{array}{lll} 
a & \mbox{if} & w_k = a, \\                                                                                                                                                                                                                                                                         
b & \mbox{if} & w_k = c \mbox{ and } |w_1 \dots w_k|_c \mbox{ is odd} \\
c & \mbox{if} & w_k = c \mbox{ and }  |w_1 \dots w_k|_c  \mbox{ is even.} 
\end{array} \right.$$ The bisecting substitution $\sigma: \{a, b, c\}^\ast \to \{a, b, c\}^\ast$ is then defined as $$\sigma(a) = v_1 \dots v_m, \quad \sigma(b) = v_{m+1} \dots v_{2m},  \quad \sigma(c) = v_{2m+1} \dots v_{n}, \, \mbox{where  } m = |f(a)|.$$
\end{definition}

{\bf Remark}: PWWF scales have distinct inversions, whereas the inversion of a mode of a well-formed scale is a mode of that scale (e.g., Ionian inverted is Phrygian). In word-theoretical language, if $w$ is a standard word, the reversal of $w$ is in the conjugacy class of $w$. For $w \in \mathfrak{A}$, the reversal of $w$ is in its own conjugacy class, distinct from that of $w$. For $k \in \mathbb{N}$, there are $\phi(k)/2$ distinct conjugacy classes of standard words of length $k$; for PWWF words ($k$ odd), there are $\phi(k)$ distinct conjugacy classes \cite{Clampitt1997}. The defining projections are insensitive to reversal, however, up to trivial replacements of the letters, so we may pair $w$ with its reversal, and choose whichever is convenient as representive. There is thus a bijection between conjugacy classes of standard words and classes of PWWF words of odd length $k$.

For example, $w = bacabac \in \mathfrak{A}$, and its reversal is $w' = cabacab$. $\pi_{c \to a}(w) = baaabaa$ (representing Phrygian), $\pi_{b \to a}(w) = aacaaac$ (representing Ionian), and $\pi_{b \to c}(w) = cacacac$ (representing, e.g., Dorian thirds); while $\pi_{c \to a}(w') = aabaaab$ (Ionian), $\pi_{b \to a}(w') = caaacaa$ (Phrygian), and $\pi_{b \to c}(w) = cacacac$. Therefore, we may depart from either PWWF representative. 
 
\begin{proposition}\label{Msigma} Consider a PWWF  substitution $\sigma$ and let $f(a) = \pi_{b \to c}(\sigma(a))$, $f(c) = \pi_{b \to c}(\sigma(bc))$ denote its apotomic projection. Let $M_f = \m {|f(a)|_a} {|f(c)|_a} {|f(a)|_c}  {|f(c)|_c}$ denote the incidence matrix of $f$. Then the incidence matrix of $\sigma$ is given as 
$$\tilde{M}_\sigma =  \mm  {|f(a)|_a} {|f(a)|_a} {|f(c)|_a - |f(a)|_a} 
{\floor{\displaystyle  \frac{|f(a)|_c}{2}}} {\ceil{\displaystyle \frac{|f(a)|_c}{2}}}                                                                                                                                                                                                                                                                                                                                                                                                                                                                                                                                                                                                    {\displaystyle \frac{|f(c)|_c - |f(a)|_c}{2}} 
{\ceil{\displaystyle  \frac{|f(a)|_c}{2}}} {\floor{\displaystyle \frac{|f(a)|_c}{2}}} {\displaystyle \frac{|f(c)|_c - |f(a)|_c}{2}} 
.$$ \end{proposition}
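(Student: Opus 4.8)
The plan is to read the three blocks of the bisection $v = w_\prec$ that define $\sigma(a), \sigma(b), \sigma(c)$ directly off the standard factorization $w = f(a)f(c)$, and then to account separately for the $a$'s (which the bisection leaves fixed) and for the $c$'s of $w$ (which the bisection distributes alternately between $b$ and $c$). Throughout I write $\alpha = |f(a)|_a$, $\gamma = |f(a)|_c$, $\alpha' = |f(c)|_a$, $\gamma' = |f(c)|_c$, so that $M_f = \mb{\alpha}{\alpha'}{\gamma}{\gamma'}$ and $m = |f(a)| = \alpha + \gamma$. Since $\sigma$ cuts $v$ at positions $m$ and $2m$ while $w = f(a)f(c)$, the block $\sigma(a)$ is the bisection of $f(a)$, the block $\sigma(b)$ is the bisection of the length-$m$ prefix of $f(c)$, and $\sigma(c)$ is the bisection of the remaining suffix of $f(c)$, of length $|f(c)| - m = |f(c)| - |f(a)|$.

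The one structural input I will need is that $f(a)$ is a prefix of $f(c)$. This holds because $(f(a), f(c))$ is a standard pair coming from the standard factorization of $w$, with $|f(a)| \le |f(c)|$ — the inequality is forced by $n \ge 2m$, which is exactly what makes $\sigma(c)$ well defined — and in a standard pair the shorter of the two words is always a prefix of the longer, as is immediate from the generation of standard pairs by the rules $(u,v)\mapsto(u,uv)$ and $(u,v)\mapsto(vu,v)$. Consequently the length-$m$ prefix of $f(c)$ equals $f(a)$ letter for letter; in particular it contains exactly $\alpha$ occurrences of $a$ and $\gamma$ occurrences of $c$. I expect this to be the only non-elementary step; everything after it is bookkeeping.

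For the top row I use that the bisection fixes every $a$, so $|\sigma(a)|_a = |f(a)|_a = \alpha$ and, by the prefix fact, $|\sigma(b)|_a = \alpha$ as well, while $|\sigma(c)|_a = |f(c)|_a - \alpha = \alpha' - \alpha$. For the $b$- and $c$-rows I track the global parity of the running $c$-count: the $j$-th occurrence of $c$ in $w$ becomes $b$ when $j$ is odd and $c$ when $j$ is even. The $\gamma$ occurrences of $c$ inside $\sigma(a)$ are the $c$'s numbered $1, \dots, \gamma$; those inside $\sigma(b)$ are, again by the prefix fact, the $c$'s numbered $\gamma+1, \dots, 2\gamma$; and those inside $\sigma(c)$ are the $c$'s numbered $2\gamma+1, \dots, \gamma+\gamma'$. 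Counting odd against even indices in the first range gives the split $\ceil{\gamma/2}$ versus $\floor{\gamma/2}$, and in the second range the complementary split, which fills the two left entries of the $b$- and $c$-rows; the question of which of $\floor{\cdot}$ and $\ceil{\cdot}$ lands in the $b$-row rather than the $c$-row is precisely the harmless exchange of the letters $b$ and $c$ noted in the Remark.

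Finally, the hypothesis that $|w|_c = \gamma + \gamma'$ is even is what makes the last column uniform. The third range $2\gamma+1, \dots, \gamma+\gamma'$ has length $\gamma' - \gamma$, which is even (it has the same parity as $\gamma+\gamma'$) and begins at the odd index $2\gamma+1$, so it contains equally many odd and even indices, namely $(\gamma'-\gamma)/2$ of each; hence $|\sigma(c)|_b = |\sigma(c)|_c = (\gamma'-\gamma)/2$. Collecting all entries yields the displayed matrix $\tilde{M}_\sigma$. As a consistency check, the three column sums come out to $\alpha+\gamma$, $\alpha+\gamma$ and $(\alpha'-\alpha)+(\gamma'-\gamma)$, matching the block lengths $|\sigma(a)| = m$, $|\sigma(b)| = m$ and $|\sigma(c)| = |f(c)| - |f(a)|$.
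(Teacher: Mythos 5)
The paper states Proposition~\ref{Msigma} without proof, so there is no authors' argument to compare yours against: your derivation is correct and supplies the missing proof. Your two ingredients are the right ones. First, reading $\sigma(a)$, $\sigma(b)$, $\sigma(c)$ as the bisections of $f(a)$, of the length-$m$ prefix of $f(c)$, and of the remaining suffix of $f(c)$; second, the fact that $f(a)$ is a prefix of $f(c)$ --- the very fact the paper itself invokes, also without proof, inside the proof of Proposition~\ref{gstandard} (``Knowing that $f(a)$ is a prefix of $f(c)$\dots''). Your justification of the prefix fact via the standard-pair productions $(u,v)\mapsto(u,uv)$ and $(u,v)\mapsto(vu,v)$ is sound; the only degenerate case is the base pair $(a,c)$ itself, which is excluded here since $|w|$ odd forces $|f(a)|<|f(c)|$ and hence at least one production step. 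The parity bookkeeping over the three ranges of $c$-occurrences is correct, including the use of the evenness of $|w|_c$ to make the last column uniform, and your column-sum consistency check is a nice touch. One point deserves more emphasis than the passing remark you give it: your count puts $\lceil\gamma/2\rceil$ occurrences of $b$ (not $\lfloor\gamma/2\rfloor$) into $\sigma(a)$, since the \emph{first} $c$-occurrence of $w$ is sent to $b$; so under the alphabetical row order $(a,b,c)$ the displayed matrix has its second and third rows interchanged relative to what the definition of the bisection actually produces (equivalently, the stated matrix is the incidence matrix for row order $(a,c,b)$, or becomes correct after the exchange $b\leftrightarrow c$). A concrete check: $f(a)=ca$, $f(c)=cacac$ gives $v=bacabac$ and $\sigma(a)=ba$, whence $|\sigma(a)|_b=1=\lceil 1/2\rceil$, while the stated entry is $\lfloor 1/2\rfloor=0$. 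You are right that this is harmless --- in particular the paper's only downstream use of $\tilde{M}_\sigma$, the proof of Proposition~\ref{Mg}, is insensitive to the swap because it only ever forms the sum $\lfloor\gamma/2\rfloor+\lceil\gamma/2\rceil=\gamma$ --- but strictly speaking your argument proves the proposition as stated only up to this relabelling, and you should say explicitly that the discrepancy lies in the proposition's floor/ceiling placement (or its implicit row convention), not in your computation.
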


\section{Apo-syntonic Conversion}
In addition to the letter projections $\pi_{b \to c}, \pi_{a \to b}, \pi_{c \to a}   $ on words $v \in A^{\ast}$, we apply them to substitutions as follows (using the same symbols):
\begin{definition}\label{PWWFsubstitution} Consider a substitution (a monoid morphism) $\sigma: A^{\ast} \to A^{\ast}$. Then we obtain three induced substitutions $\pi_{b \to c}(\sigma): \{a, c\}^\ast \to \{a, c\}^\ast$, $\pi_{a \to b}(\sigma): \{b, c\}^\ast \to \{b, c\}^\ast$ and $\pi_{c \to a}(\sigma): \{a, b\}^\ast \to \{a, b\}^\ast$ by virtue of:
$$\begin{array}{llllllllll}
& [\pi_{b \to c}(\sigma)](a) & := & \pi_{b \to c}(\sigma (a)) & \quad & [\pi_{b \to c}(\sigma)](c) & := & \pi_{b \to c}(\sigma (bc)) \\
& [\pi_{a \to b}(\sigma)](b) & := & \pi_{a \to b}(\sigma (ab)) & \quad & [\pi_{a \to b}(\sigma)](c) & := & \pi_{a \to b}(\sigma (c))\\
& [\pi_{c \to a}(\sigma)](a) & := & \pi_{c \to a}(\sigma (ab)) & \quad & [\pi_{c \to a}(\sigma)](b) & := & \pi_{c \to a}(\sigma (c)) \\
\end{array}$$
We say that $\sigma$ is an authentic PWWF substitution iff all three projections $\pi_{b \to c}(\sigma)$, $\pi_{a \to b}(\sigma)$ and $\pi_{c \to a}(\sigma)$ are Special Sturmian morphisms.
\end{definition}  
In the rest of this section we will assume that $\sigma$ is the bisecting substitution associated with a suitable special standard morphism $f$ and hence $f = \pi_{b \to c}(\sigma)$. Further we use the symbols $g$ and $\tilde{g}$ for the projections $g = \pi_{a \to b}(\sigma)$ and $\tilde{g} = \pi_{c \to a}(\sigma)$ The diagram in Figure \ref{fig:bisection} shows the interplay of $\sigma$ with its three projections.       
    
\begin{figure}
\begin{center}
\unitlength 1.1 cm
\begin{picture}(5, 2)(-2.5, -0.2)
\put(0.1,2){\makebox(0,0){$\sigma$}}
\put(1,-0.1){\makebox(0,0){$g$}}
\put(-3,-0.1){\makebox(0,0){$f$}}
\put(2.95,-0.1){\makebox(0,0){$\tilde{g}$}}
\put(0,1.8){\vector(-3, -2){2.7}}
\put(-2.8, 0.1){\vector(3, 2){2.7}}
\put(0.05, 1.8) {\vector(1, -2){0.85}}
\put(0.1, 1.8) {\vector(3, -2){2.7}}
\put(-2.75,-0.1){\vector(1,0){3.5}}
\put(1.2,- 0.1){\vector(1,0){1.6}}
\put(-2.2,1.2){\makebox(0,0){\small{bisection}}}
\put(-1,-0.3){\makebox(0,0){\small{apo-syntonic conversion}}}
\put(2.0,-0.3){\makebox(0,0){\small{conjugation}}}
\put(-1.0,0.8){\makebox(0,0){$\pi_{b \to c}$}}
\put(1,0.8){\makebox(0,0){$\pi_{a \to b}$}}
\put(2.0,0.8){\makebox(0,0){$\pi_{c \to a}$}}
\end{picture}
\caption{Interplay of a PWWF substitution $\sigma$ with its projections $f$, $g$ and $\tilde{g}$.}%
\label{fig:bisection}
\end{center}
\end{figure}
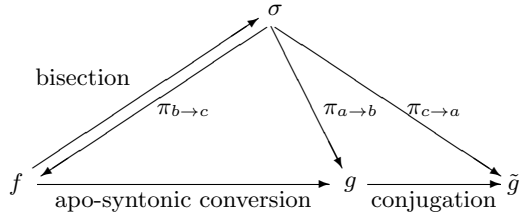

Our goal is now to understand the interdependence between $f$ and $g$. 

\begin{proposition}\label{Mg}
Consider an authentic (non-singular) PWWF mode $\sigma \in \mathfrak{A}$ with the projections $f = \pi_{b \to c} (\sigma), g = \pi_{a \to b} (\sigma)$ and $\tilde{g} = \pi_{c \to a} (\sigma)$. Then the common incidence matrix $M_g = M_{\tilde{g}}$  of $g$ and $\tilde{g}$ can be expressed in terms of the coefficients of the incidence matrix $M_f = \m {|f(a)|_a} {|f(c)|_a} {|f(a)|_c}  {|f(c)|_c}$ as follows:                                                                            
$$M_g = M_{\tilde{g}} = \m {2 |f(a)|_a + |f(a)|_c} {|f(c)|_a - |f(a)|_a + \frac{1}{2}(|f(c)|_c - |f(a)|_c)} {|f(a)|_c} {\frac{1}{2}(|f(c)|_c - |f(a)|c_b)}.$$
\end{proposition}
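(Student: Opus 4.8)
The plan is to compute the two incidence matrices $M_g$ and $M_{\tilde{g}}$ directly, column by column, from the definition of the induced projections in Definition \ref{PWWFsubstitution} together with the incidence matrix $\tilde{M}_\sigma$ supplied by Proposition \ref{Msigma}. No structural input about Sturmian morphisms is needed beyond what is already encoded in $\tilde{M}_\sigma$: the entire statement is a bookkeeping identity about how letter multiplicities transform under the merging projections $\pi_{a \to b}$ and $\pi_{c \to a}$.

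First I would record the two elementary facts that drive the computation. Letter multiplicity is additive over concatenation, so $|\sigma(ab)|_x = |\sigma(a)|_x + |\sigma(b)|_x$ for each letter $x$; and the projection $\pi_{a \to b}$, which sends $a \mapsto b$ and fixes $b$ and $c$, acts on counts by $|\pi_{a \to b}(u)|_b = |u|_a + |u|_b$ and $|\pi_{a \to b}(u)|_c = |u|_c$. Since $g(b) = \pi_{a \to b}(\sigma(ab))$ and $g(c) = \pi_{a \to b}(\sigma(c))$, each of the four entries of $M_g$ becomes a sum of entries read off from the columns of $\tilde{M}_\sigma$. For instance $|g(b)|_b = |\sigma(a)|_a + |\sigma(a)|_b + |\sigma(b)|_a + |\sigma(b)|_b$, and substituting the first two columns of $\tilde{M}_\sigma$ gives $2|f(a)|_a + (\floor{|f(a)|_c/2} + \ceil{|f(a)|_c/2})$. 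Here the only nonroutine point appears: the floor and ceiling terms recombine via $\floor{t/2} + \ceil{t/2} = t$, so the parity of $|f(a)|_c$ drops out and the entry collapses to the stated $2|f(a)|_a + |f(a)|_c$. The same cancellation handles the $b$-count of $g(c)$, giving $|f(a)|_c$, and in the remaining two entries involving $\sigma(c)$ it simply reproduces the halves $\frac{1}{2}(|f(c)|_c - |f(a)|_c)$ already present in the third column of $\tilde{M}_\sigma$.

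Next I would run the parallel computation for $\tilde{g} = \pi_{c \to a}(\sigma)$, where $\pi_{c \to a}$ sends $c \mapsto a$ and fixes $a, b$, so now $|\pi_{c \to a}(u)|_a = |u|_a + |u|_c$ and $|\pi_{c \to a}(u)|_b = |u|_b$. Using $\tilde{g}(a) = \pi_{c \to a}(\sigma(ab))$ and $\tilde{g}(b) = \pi_{c \to a}(\sigma(c))$ and the same floor-plus-ceiling collapse, each entry of $M_{\tilde{g}}$ matches the correspondingly positioned entry of $M_g$, which simultaneously establishes the asserted matrix and the equality $M_g = M_{\tilde{g}}$. I expect no genuine obstacle here, since the argument is a finite verification, but two points deserve care: matching the row/column convention used for $M_f$ so the output is oriented consistently, and observing that the half-integer-looking entries are genuine integers because the standing hypothesis that $|w|_c$ is even forces $|f(a)|_c \equiv |f(c)|_c \pmod 2$. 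The conceptual content worth flagging is exactly the floor/ceiling recombination: the bisection splits the $c$'s of $f(a)$ unevenly between $\sigma(a)$ and $\sigma(b)$, but the merging projections glue these halves back together, which is what makes $M_g$ and $M_{\tilde{g}}$ both integer-valued and equal.
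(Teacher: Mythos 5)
Your proposal is correct and takes essentially the same approach as the paper: the paper's proof likewise obtains $M_g$ by adding the first two columns and first two rows of $\tilde{M}_\sigma$ from Proposition \ref{Msigma} and collapsing $\lfloor t/2 \rfloor + \lceil t/2 \rceil = t$, which is exactly your column-by-column bookkeeping. Your only additions are cosmetic --- you also spell out the parallel computation for $\tilde{g}$, which the paper leaves implicit --- and one label is transposed: ``the $b$-count of $g(c)$'' should read ``the $c$-count of $g(b)$'', since that is the entry equal to $|f(a)|_c$ where the floor/ceiling cancellation occurs.
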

\begin{proof} The incidence matrix $M_g$ can be obtained from adding the first two columns and the first two rows of $\tilde{M}_\sigma$. Thus, after proposition \ref{Msigma} the upper left entry of $M_g$ becomes $ {|f(a)|_a} + {|f(a)|_a}  + {\floor{\displaystyle  \frac{|f(a)|_c}{2}}}  + {\ceil{\displaystyle \frac{|f(a)|_c}{2}}} = 2 |f(a)|_a +  f(a)|_c$. The upper right entry becomes ${|f(c)|_a - |f(a)|_a} +  {\displaystyle \frac{|f(c)|_c - |f(a)|_c}{2}}$, the lower left entry becomes ${\ceil{\displaystyle  \frac{|f(a)|_c}{2}}}+ {\floor{\displaystyle \frac{|f(a)|_c}{2}}} = |f(a)|_c$. The lower right entry remains $ {\displaystyle \frac{|f(c)|_c - |f(a)|_c}{2}}$.
\end{proof}

In order to understand the connection between $f$ and $g$ more directly, rather than via the substitution $\sigma$, we have a closer look at the structure of the linear map: $\tilde{\beta}: GL_2(\mathbb{R}) \to GL_2(\mathbb{R})$ with $$\tilde{\beta}(\m {a_{11}} {a_{12}} {a_{21}} {a_{22}}) := \m {2 a_{11} + a_{21}} {a_{12} - a_{11} + (a_{22} - a_{21})\slash 2} {a_{21}} {(a_{22} - a_{21})\slash 2}.$$
With $R = \m 1 1 0 1$ let $\Sigma^2$ and $\Sigma_2$ denote the following two subsets of $SL_2(\mathbb{N})$:
$$\begin{array}{lll} 
\Sigma^2 & = & \left \{ R \cdot \m {2 b_{11}} {b_{12}} {b_{21}} {b_{22}} \quad | \, {b_{11}}, {b_{12}}, {b_{21}}, {b_{22}} \in \mathbb{N}, 2 {b_{11}} {b_{22}} - {b_{12}} {b_{21}} =1 \right \}  \\
\Sigma_2 & = & \left \{ \m {b_{11}} {b_{12}} {b_{21}} {2 b_{22}} \cdot R   \quad | \, {b_{11}}, {b_{12}}, {b_{21}}, {b_{22}} \in \mathbb{N}, 2 {b_{11}} {b_{22}} - {b_{12}} {b_{21}} =1 \right \}  \\
\end{array}$$ 

\begin{lemma} $SL_2(\mathbb{N}) \cap \tilde{\beta}^{-1}(SL_2(\mathbb{N})) = \Sigma_2$ and $\tilde{\beta}(\Sigma_2) = \Sigma^2$.
\end{lemma}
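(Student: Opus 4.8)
The plan is to translate both the defining data of $\Sigma_2$, $\Sigma^2$ and the action of $\tilde{\beta}$ into explicit entrywise conditions on a matrix in $SL_2(\mathbb{N})$, and then to verify the two set equalities by elementary parity and inequality arguments driven by the determinant. I would open with two preliminary computations. First, multiplying out the products in the definitions and solving the resulting linear relations for $b_{11},b_{12},b_{21},b_{22}$, one sees that $\mb{a_{11}}{a_{12}}{a_{21}}{a_{22}}\in\Sigma_2$ exactly when it lies in $SL_2(\mathbb{N})$ and satisfies $a_{12}\ge a_{11}$, $a_{22}\ge a_{21}$, and $a_{22}\equiv a_{21}\pmod 2$; crucially, the quadratic constraint $2b_{11}b_{22}-b_{12}b_{21}=1$ collapses to $\det=1$ and is thus already encoded by membership in $SL_2(\mathbb{N})$. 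Likewise $\mb{c_{11}}{c_{12}}{c_{21}}{c_{22}}\in\Sigma^2$ exactly when it lies in $SL_2(\mathbb{N})$ with $c_{11}\ge c_{21}$, $c_{11}\equiv c_{21}\pmod 2$, and $c_{12}\ge c_{22}$. Second, I would record the structural fact that $\tilde{\beta}$ preserves the determinant, $\det\tilde{\beta}(M)=\det M$, a one-line expansion; this makes every determinant-one condition automatic once nonnegative integer entries are secured.

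For the first equality, take $M\in SL_2(\mathbb{N})$ and read off the entries of $\tilde{\beta}(M)$. The entries $2a_{11}+a_{21}$ and $a_{21}$ are automatically nonnegative integers, so the only genuine constraints for $\tilde{\beta}(M)\in SL_2(\mathbb{N})$ are that the $(2,2)$-entry $(a_{22}-a_{21})/2$ be a nonnegative integer (forcing $a_{22}\ge a_{21}$ and $a_{22}\equiv a_{21}\pmod 2$) and that the $(1,2)$-entry $a_{12}-a_{11}+(a_{22}-a_{21})/2$ be nonnegative. The inclusion $\Sigma_2\subseteq SL_2(\mathbb{N})\cap\tilde{\beta}^{-1}(SL_2(\mathbb{N}))$ is then immediate. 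The converse inclusion is the one delicate point, and where I expect the main (though still modest) difficulty to lie: a priori the inequality $a_{12}-a_{11}+(a_{22}-a_{21})/2\ge0$ is strictly weaker than the defining inequality $a_{12}\ge a_{11}$ of $\Sigma_2$. I would close this gap by setting $b_{22}=(a_{22}-a_{21})/2\ge0$ and rewriting $\det M=1$ as $a_{21}(a_{11}-a_{12})+2a_{11}b_{22}=1$. Assuming $a_{11}>a_{12}$ forces $a_{11}\ge1$, so the term $2a_{11}b_{22}$ cannot exceed $1$ and must vanish, giving $b_{22}=0$ and $a_{21}(a_{11}-a_{12})=1$, hence $a_{11}-a_{12}=1$; but then the $(1,2)$-entry condition reads $a_{12}-a_{11}+b_{22}=-1\ge0$, a contradiction. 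Therefore $a_{12}\ge a_{11}$ and the two sets coincide.

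For the second equality, the inclusion $\tilde{\beta}(\Sigma_2)\subseteq\Sigma^2$ is a direct check: for $M\in\Sigma_2$ the entries $c_{ij}$ of $\tilde{\beta}(M)$ satisfy $c_{11}=2a_{11}+a_{21}\ge c_{21}=a_{21}$ with $c_{11}-c_{21}=2a_{11}$ even, while $c_{12}-c_{22}=a_{12}-a_{11}\ge0$ uses precisely the $\Sigma_2$ condition $a_{12}\ge a_{11}$. For the reverse inclusion I would exploit that $\tilde{\beta}$ is an invertible linear map: solving the system yields the unique preimage of a given $N=\mb{c_{11}}{c_{12}}{c_{21}}{c_{22}}\in\Sigma^2$, namely $M$ with $a_{21}=c_{21}$, $a_{11}=(c_{11}-c_{21})/2$, $a_{22}=2c_{22}+c_{21}$ and $a_{12}=c_{12}-c_{22}+(c_{11}-c_{21})/2$. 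The $\Sigma^2$ conditions translate exactly into the integrality and nonnegativity of these entries and into $a_{12}-a_{11}=c_{12}-c_{22}\ge0$ together with $a_{22}-a_{21}=2c_{22}$, so that $M\in\Sigma_2$; combined with determinant preservation this gives $N=\tilde{\beta}(M)\in\tilde{\beta}(\Sigma_2)$. The two inclusions then yield $\tilde{\beta}(\Sigma_2)=\Sigma^2$.
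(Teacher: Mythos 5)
Your proof is correct, and its skeleton is the same as the paper's: everything comes down to the explicit formula for $\tilde{\beta}^{-1}$ together with the observation that membership in $\Sigma_2$ (resp.\ $\Sigma^2$) amounts to peeling off the factor $R$ on the right (resp.\ left) --- your entrywise characterizations of the two sets are exactly the paper's matrices $Y = \tilde{\beta}^{-1}(X)\cdot R^{-1}$ and $Z = R^{-1}\cdot X$ in disguise, and your inverse formula for the second equality coincides with the paper's displayed $\tilde{\beta}^{-1}$. The genuine difference lies in rigor at the one delicate spot, which you correctly isolated. The paper works backward from the codomain: it computes $\tilde{\beta}^{-1}(X)$ for $X \in SL_2(\mathbb{N})$, notes the parity condition on $c_{11}-c_{21}$, and then simply asserts that $c_{12}-c_{22} \ge 0$ must hold (``it turns out that $X \in SL_2(\mathbb{N})$ cannot be arbitrary. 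Also $Z = R^{-1}\cdot X$ must be in $SL_2(\mathbb{N})$'') without deriving it; a priori, nonnegativity of the $(1,2)$-entry $\frac{c_{11}-c_{21}}{2} + c_{12} - c_{22}$ of $\tilde{\beta}^{-1}(X)$ is strictly weaker than $c_{12} \ge c_{22}$, just as you observed in the mirror-image form ($a_{12}-a_{11}+(a_{22}-a_{21})/2 \ge 0$ versus $a_{12} \ge a_{11}$). Your determinant--parity argument --- rewriting $\det M = 1$ as $a_{21}(a_{11}-a_{12}) + 2a_{11}b_{22} = 1$, noting that $2a_{11}b_{22}$ is even and hence vanishes under the assumption $a_{11} > a_{12}$, and extracting the contradiction $a_{12}-a_{11}+b_{22} = -1 \ge 0$ --- is exactly the justification the paper's proof leaves implicit, and your explicit check that $\tilde{\beta}$ preserves determinants makes the quadratic constraint $2b_{11}b_{22} - b_{12}b_{21} = 1$ transparent rather than needing separate verification. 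In short: same computational route, but yours runs forward from the domain via double inclusions and actually closes the inequality gap that the paper's terser inverse-image argument glosses over.
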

\begin{proof}
Consider an arbitrary matrix $X = \m {c_{11}} {c_{12}} {c_{21}} {c_{22}} \in SL_2(\mathbb{N})$. Then we have $$\tilde{\beta}^{-1}(\m {c_{11}} {c_{12}} {c_{21}} {c_{22}}) = 
\m {\frac{c_{11}-c_{21}}{2}} {\frac{c_{11}-c_{21}}{2} + c_{12} - c_{22}} {c_{21}} {c_{21} + 2 c_{22}}.$$ The entry $c_{21} + 2c_{22}$ is larger than $c_{21}$ and therefore $\tilde{\beta}^{-1}(X) \in  SL_2(\mathbb{N})$ iff $Y = \tilde{\beta}^{-1}(X) \cdot R^{-1} = \m {\frac{c_{11}-c_{21}}{2}} {c_{12} - c_{22}} {c_{21}} {2 c_{22}} \in  SL_2(\mathbb{N})$. But in order to have a positive entry $c_{12} - c_{22}$ it turns out that $X \in SL_2(\mathbb{N})$ cannot be arbitrary. Also $Z = R^{-1} \cdot X = \m {c_{11} - c_{21}} {c_{12} - c_{22}} {c_{21}} {c_{22}}$  must be in $SL_2(\mathbb{N})$. And this implies $c_{11} \ge c_{21}$. Finally, we have to deal with the condition that the upper left entry $\frac{c_{11} - c_{21}}{2}$ of $\tilde{\beta}^{-1}(X)$ needs to be an integer. This implies that $c_{11} - c_{21}$, which is also the upper left entry of $Z$, is even, i.e., $RZ = X \in \Sigma^2$.
\end{proof}

\begin{corollary} The set $\Sigma_2$ parametrizes the conjugation classes of all authentic PWWF substitutions $\sigma$ in terms of the incidence matrices $M_f$ of their associated apotomic projections: the special standard morphisms $f = \pi_{b \to c}(\sigma)$. Also the set $\Sigma^2$ parametrizes these same conjugation classes by virtue of the incidence matrices $M_g$ of their associated apo-syntonic projections $g= \pi_{a \to b}(\sigma)$.
\end{corollary}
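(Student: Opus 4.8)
The plan is to read this Corollary as the conceptual translation of the Lemma, with Propositions~\ref{Msigma} and~\ref{Mg} serving as the bridge. First I would set up the correspondence between authentic PWWF substitutions and their apotomic projections. By the convention fixed at the start of this section, every bisecting substitution $\sigma$ is built from a special standard morphism $f$, and conversely $f = \pi_{b\to c}(\sigma)$ is read off from $\sigma$; since bisection and apotomic projection are mutually inverse up to conjugation, the assignment $\sigma \mapsto f$ descends to a bijection between conjugation classes of PWWF substitutions and special standard morphisms $f$. Invoking the word-theoretic background (see \cite{Lothaire}, \cite{Berthe}), passing to the incidence matrix identifies these special standard morphisms faithfully with matrices in $SL_2(\mathbb{N})$ — each special standard morphism being the distinguished standard representative of a conjugation class of special Sturmian morphisms — so a conjugation class of PWWF substitutions is recorded faithfully by $M_f \in SL_2(\mathbb{N})$.

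Next I would cut down to the matrices that actually occur. By Definition~\ref{PWWFsubstitution}, $\sigma$ is authentic precisely when all three projections $f, g, \tilde g$ are special Sturmian; here $f$ is special standard by construction, and $g, \tilde g$ share the incidence matrix $M_g = M_{\tilde g}$. Since the incidence matrix is a faithful invariant of special Sturmian morphisms, authenticity of $g$ (equivalently $\tilde g$) reduces to the single requirement $M_g \in SL_2(\mathbb{N})$. By Proposition~\ref{Mg} the passage $M_f \mapsto M_g$ is exactly the linear map $\tilde\beta$, so the admissible $M_f$ are those with $M_f \in SL_2(\mathbb{N})$ and $\tilde\beta(M_f) \in SL_2(\mathbb{N})$, i.e. $M_f \in SL_2(\mathbb{N}) \cap \tilde\beta^{-1}(SL_2(\mathbb{N}))$, which the Lemma identifies with $\Sigma_2$; this gives the first parametrization. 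Here I would also note the pleasant consistency that the integrality forced by $\tilde\beta(M_f) \in SL_2(\mathbb{N})$ — namely that $(|f(c)|_c - |f(a)|_c)/2$ be an integer, hence $|w|_c$ even — is exactly the parity condition the bisection requires, so well-definedness of $\sigma$ and authenticity are captured together. For the second parametrization I would use that $\tilde\beta$ is a linear isomorphism (its inverse is written explicitly in the proof of the Lemma) and hence injective on $\Sigma_2$, so that $M_g = \tilde\beta(M_f)$ ranges bijectively over $\tilde\beta(\Sigma_2) = \Sigma^2$ as $\sigma$ runs through the conjugation classes.

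The hard part will not be the matrix bookkeeping, which the Lemma already disposes of, but justifying the two faithfulness steps that upgrade the Lemma to a genuine parametrization. Concretely, I must argue that a conjugation class of $\sigma$ is truly determined by $M_f$ (that the bisection is reversible up to conjugation, and that non-conjugate substitutions cannot share an $f$), and — more delicately — that the matrix condition $M_g \in SL_2(\mathbb{N})$ is not merely necessary but \emph{sufficient} for $g$ to be special Sturmian. The latter is the crux: a general morphism with incidence matrix in $SL_2(\mathbb{N})$ need not be Sturmian, so I would have to exploit the special provenance of $g = \pi_{a\to b}(\sigma)$ — that it arises from the bisecting construction on a standard word — together with the bijection between special Sturmian morphisms and $SL_2(\mathbb{N})$, to conclude that here the incidence matrix does pin down the morphism and that the positivity and integrality of $\tilde\beta(M_f)$ place $g$ in the special Sturmian class.
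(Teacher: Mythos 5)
Your skeleton is correct and is essentially the paper's own (implicit) justification: the paper states this corollary without proof, intending exactly the assembly you describe --- conjugation classes of PWWF substitutions correspond to special standard morphisms $f$ via bisection/apotomic projection, special standard morphisms correspond bijectively to incidence matrices in $SL_2(\mathbb{N})$, Proposition \ref{Mg} identifies the passage $M_f \mapsto M_g$ with $\tilde\beta$, and the Lemma then cuts the admissible matrices down to $\Sigma_2$, with $\tilde\beta(\Sigma_2)=\Sigma^2$ giving the second parametrization. Your observation that the parity forced by $\tilde\beta(M_f)\in SL_2(\mathbb{N})$ (namely $|f(ac)|_c$ even, and, via the determinant condition, $|f(ac)|$ odd) is precisely what makes the bisection well-defined is also the right consistency check for surjectivity.

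The crux you flag at the end is genuine, and you are right that it cannot be discharged by incidence matrices alone: membership of $M_g$ in $SL_2(\mathbb{N})$ does not certify that $g$ is special Sturmian, since bad conjugates furnish non-Sturmian morphisms with the same unimodular nonnegative matrix. The paper resolves exactly this point, but only afterwards and by word combinatorics rather than matrix reasoning: Proposition \ref{gstandard} shows directly that when $f$ is special standard, the apo-syntonic projection $g$ of the bisecting substitution is itself special standard, by exhibiting the conjugate $cuc^{-1}$ of $u=g(bc)$ as the bad conjugate (using that $f(a)$ is a prefix of $f(c)$); the remaining projection $\tilde g$ is then handled as a (good) conjugate of $g$, per Figure \ref{fig:bisection}. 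So your plan is sound provided you import that proposition (or reprove it) at the point you call the crux; your phrase ``the bijection between special Sturmian morphisms and $SL_2(\mathbb{N})$'' should, as in your first paragraph, read \emph{special standard} morphisms --- a whole conjugation class of Sturmian morphisms shares one matrix, which is precisely why the standard representative, not the matrix, must carry the argument.
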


This motivates the following definition:
\begin{definition} The restriction of $\tilde{\beta}$ to the subset $\Sigma_2$ is called the \emph{apo-syntonic conversion}: $$\beta: \Sigma_2 \to \Sigma^2.$$
\end{definition}

Let $\delta: SL_2(\mathbb{N}) \to SL_2(\mathbb{N})$ denote the main-diagonal-flip $$\delta(\m {b_{11}} {b_{12}} {b_{21}} {b_{22}}) := \m {b_{22}} {b_{12}} {b_{21}} {b_{11}}.$$

\begin{lemma} $\delta$ mutually exchanges $\Sigma^2$ and $\Sigma_2$, i.e. $\delta(\Sigma^2) = \Sigma_2$ and $\delta(\Sigma_2) = \Sigma^2$.  
\end{lemma}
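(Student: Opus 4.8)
The plan is to make both sets completely explicit by carrying out the two matrix products with $R$, and then to observe that $\delta$ acts on the parametrizing data $(b_{11},b_{12},b_{21},b_{22})$ in the simplest possible way, namely by transposing $b_{11}$ and $b_{22}$. First I would expand the defining products. Multiplying out $R \cdot \m {2b_{11}} {b_{12}} {b_{21}} {b_{22}}$ shows that a generic element of $\Sigma^2$ has the form $\m {2b_{11}+b_{21}} {b_{12}+b_{22}} {b_{21}} {b_{22}}$, while multiplying out $\m {b_{11}} {b_{12}} {b_{21}} {2b_{22}} \cdot R$ shows that a generic element of $\Sigma_2$ has the form $\m {b_{11}} {b_{11}+b_{12}} {b_{21}} {b_{21}+2b_{22}}$, in both cases subject to $2b_{11}b_{22}-b_{12}b_{21}=1$ with $b_{ij}\in\mathbb{N}$.

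Next I would apply $\delta$ to the generic element of $\Sigma^2$, obtaining $\m {b_{22}} {b_{12}+b_{22}} {b_{21}} {2b_{11}+b_{21}}$, and read off its entries against the generic form of $\Sigma_2$. This matches exactly under the substitution $(b_{11}',b_{12}',b_{21}',b_{22}') = (b_{22},b_{12},b_{21},b_{11})$: the top-left entry forces $b_{11}'=b_{22}$, the top-right entry then forces $b_{12}'=b_{12}$, the bottom-left entry gives $b_{21}'=b_{21}$, and the bottom-right entry forces $b_{22}'=b_{11}$. The one thing that must be verified is that the new data still satisfies the quadratic constraint, and this is immediate, since $2b_{11}'b_{22}'-b_{12}'b_{21}' = 2b_{22}b_{11}-b_{12}b_{21}$ is literally the old constraint; the determinant condition is symmetric in $b_{11}$ and $b_{22}$, so the transposition leaves it invariant. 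Nonnegativity and integrality are inherited because the primed tuple is a permutation of the original nonnegative integer data. This establishes $\delta(\Sigma^2)\subseteq\Sigma_2$, and the entirely symmetric computation starting from a generic element of $\Sigma_2$ establishes $\delta(\Sigma_2)\subseteq\Sigma^2$.

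Finally I would upgrade the two inclusions to the desired equalities using that $\delta$ is an involution, $\delta^2=\mathrm{id}$: applying $\delta$ to $\delta(\Sigma_2)\subseteq\Sigma^2$ gives $\Sigma_2\subseteq\delta(\Sigma^2)$, which together with $\delta(\Sigma^2)\subseteq\Sigma_2$ yields $\delta(\Sigma^2)=\Sigma_2$, and symmetrically $\delta(\Sigma_2)=\Sigma^2$. I do not expect a genuine obstacle here: the whole argument reduces to matching matrix entries, and the only step that could in principle fail, preservation of the defining constraint, works for free because the transposition $b_{11}\leftrightarrow b_{22}$ fixes the symmetric quantity $2b_{11}b_{22}-b_{12}b_{21}$. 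The only mild care required is to confirm that the forced values of the primed parameters are nonnegative integers, which is automatic since they are merely a rearrangement of the original data.
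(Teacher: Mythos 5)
Your proof is correct. Note that the paper itself states this lemma without any proof at all (it is treated as an immediate consequence of the definitions), so there is no authorial argument to compare against; your direct verification is precisely the routine computation the authors presumably had in mind. The two expansions $R \cdot \m{2b_{11}}{b_{12}}{b_{21}}{b_{22}} = \m{2b_{11}+b_{21}}{b_{12}+b_{22}}{b_{21}}{b_{22}}$ and $\m{b_{11}}{b_{12}}{b_{21}}{2b_{22}} \cdot R = \m{b_{11}}{b_{11}+b_{12}}{b_{21}}{b_{21}+2b_{22}}$ are right, the entrywise matching under the transposition $b_{11}\leftrightarrow b_{22}$ works as you say, and your observation that the constraint $2b_{11}b_{22}-b_{12}b_{21}=1$ is symmetric in $b_{11}$ and $b_{22}$ is the one genuinely substantive point. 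Upgrading the two inclusions to equalities via $\delta^2=\mathrm{id}$ is clean and closes the argument with no gaps.
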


\begin{proposition}\label{commdiag} We have the following commutative diagram

\begin{center}
\unitlength 1 cm
\begin{picture}(4,2.5)
\put(0,2){\makebox(0,0){$\Sigma_2$}}
\put(3,2){\makebox(0,0){$\Sigma^2$}}
\put(0,0){\makebox(0,0){$\Sigma^2$}}
\put(3,0){\makebox(0,0){$\Sigma_2$}}
\put(0.7,2){\vector(1,0){1.5}}
\put(2.2,0){\vector(-1,0){1.5}}
\put(-0.1,1.7){\vector(0,-1){1.4}}
\put(3.0,1.7){\vector(0,-1){1.4}}
\put(0.05,0.3){\vector(0,1){1.4}}
\put(3.15,0.3){\vector(0,1){1.4}}
\put(1.4,0.2){\makebox(0,0){$\beta$}}
\put(1.4,2.2){\makebox(0,0){$\beta$}}
\put(-0.5,1.1){\makebox(0,0){$\delta$}}
\put(3.5,1.1){\makebox(0,0){$\delta$}}
\end{picture}
\end{center}

\end{proposition}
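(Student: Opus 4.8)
The plan is to read off both $\beta$ and $\delta$ in the shared parameters $(b_{11},b_{12},b_{21},b_{22})$, constrained by $2b_{11}b_{22}-b_{12}b_{21}=1$, that cut out $\Sigma_2$ and $\Sigma^2$, and to verify that in these coordinates the square degenerates to a transparent identity. Multiplying out the defining products, the generic element of $\Sigma_2$ is $\m {b_{11}} {b_{12}} {b_{21}} {2b_{22}} \cdot R = \m {b_{11}} {b_{11}+b_{12}} {b_{21}} {b_{21}+2b_{22}}$ and the generic element of $\Sigma^2$ is $R \cdot \m {2b_{11}} {b_{12}} {b_{21}} {b_{22}} = \m {2b_{11}+b_{21}} {b_{12}+b_{22}} {b_{21}} {b_{22}}$.

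First I would record $\beta=\tilde{\beta}|_{\Sigma_2}$ in these coordinates. Substituting the entries of the generic $\Sigma_2$-matrix into the definition of $\tilde{\beta}$ and simplifying---the lower-right entry becomes $((b_{21}+2b_{22})-b_{21})/2=b_{22}$ and the upper-right entry becomes $(b_{11}+b_{12})-b_{11}+b_{22}=b_{12}+b_{22}$---returns precisely the generic $\Sigma^2$-matrix carrying the \emph{same} quadruple $(b_{11},b_{12},b_{21},b_{22})$. Thus, in parameter coordinates, $\beta$ is the identity; this is just the computation already implicit in the first lemma, now read as a statement about the common parametrization.

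Next I would record $\delta$ in the same coordinates. Swapping the diagonal entries of the generic $\Sigma_2$-matrix produces $\m {b_{21}+2b_{22}} {b_{11}+b_{12}} {b_{21}} {b_{11}}$, and matching this against the $\Sigma^2$-template forces $(b_{22},b_{12},b_{21},b_{11})$ as its parameter quadruple; the constraint is preserved because $2b_{11}b_{22}-b_{12}b_{21}$ is symmetric in $b_{11}$ and $b_{22}$. The mirror computation handles $\delta$ on $\Sigma^2$. Hence, on both sides, $\delta$ acts in coordinates as the involution swapping $b_{11}\leftrightarrow b_{22}$ while fixing $b_{12},b_{21}$; in particular $\delta^2=\mathrm{id}$, so the two opposed vertical arrows of the diagram are genuinely inverse to one another.

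Finally I would compose around the square into its sink, the lower-left $\Sigma^2$, toward which both the left vertical $\delta$ and the bottom $\beta$ point. The long route (top $\beta$, right $\delta$, bottom $\beta$) carries $(b_{11},b_{12},b_{21},b_{22})$ to the same quadruple in $\Sigma^2$, then to $(b_{22},b_{12},b_{21},b_{11})$ in $\Sigma_2$, then to $(b_{22},b_{12},b_{21},b_{11})$ in $\Sigma^2$; the short route (left $\delta$ alone) sends it directly to $(b_{22},b_{12},b_{21},b_{11})$ in $\Sigma^2$. The two agree, so the square commutes, which is the relation $\beta\circ\delta\circ\beta=\delta$ on $\Sigma_2$. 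I expect no real obstacle: once the parametrizations of the two preceding lemmas are in hand the content is a one-line coordinate check, and the only vigilance required is over the orientations of the arrows---which copy of $\Sigma_2$ or $\Sigma^2$ sits at each vertex, and the use of $\delta^2=\mathrm{id}$ to read the vertical arrows in either direction.
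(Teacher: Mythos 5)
Your proof is correct, and it is worth noting that the paper itself states Proposition~\ref{commdiag} with no proof at all (it is treated as immediate from the two preceding lemmas), so your argument supplies details the paper omits. Your key move --- reading both maps in the shared parameter quadruple $(b_{11},b_{12},b_{21},b_{22})$, where a direct computation shows $\beta$ acts as the identity on parameters and $\delta$ acts as the involution $b_{11}\leftrightarrow b_{22}$ --- is surely the intended one: it is exactly the content of the first lemma (your forward computation of $\tilde{\beta}$ on the generic $\Sigma_2$-matrix is the inverse of the paper's computation of $\tilde{\beta}^{-1}$) and it simultaneously proves the second lemma ($\delta(\Sigma_2)=\Sigma^2$ and $\delta(\Sigma^2)=\Sigma_2$), which the paper also leaves unproved. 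I verified the arithmetic: $\tilde{\beta}$ sends $\m {b_{11}} {b_{11}+b_{12}} {b_{21}} {b_{21}+2b_{22}}$ to $\m {2b_{11}+b_{21}} {b_{12}+b_{22}} {b_{21}} {b_{22}}$, so in coordinates the relation $\beta\circ\delta\circ\beta=\delta$ reduces to (identity)$\circ$(swap)$\circ$(identity) $=$ (swap). You also correctly identified the orientation of the square, i.e.\ that its content is precisely $\beta\circ\delta\circ\beta=\delta$; this is confirmed by the paper's later proposition $\theta\circ rev\circ\theta=rev$, which it explicitly calls ``a corollary of proposition~\ref{commdiag}'' and proves directly at the level of the morphisms $G^kDG^{2n}$ --- that direct verification is the closest thing the paper offers to a proof of the present statement, and your matrix-level argument is the more general fact from which it follows. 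The only implicit step in your write-up is that the parametrizations are injective (so that ``acting on the quadruple'' is well defined), but this is immediate since $b_{11},b_{21}$ are read off from the first column and $b_{12},b_{22}$ recovered linearly from the second.
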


\begin{proposition}\label{gstandard} Under the convention that the apotomic projection $f$ is a standard morphism, the apo-syntonic projection $g$ also turns out to be a special standard morphism.
\end{proposition}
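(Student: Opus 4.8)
The plan is to establish standardness of $g$ at the level of words, via the standard-pair characterization: a special Sturmian morphism on $\{b,c\}$ is a special standard morphism iff the images of its two letters form a standard pair, equivalently iff the two concatenations $g(b)g(c)$ and $g(c)g(b)$ are the two conjugate standard words $P\,bc$ and $P\,cb$ built on one common central word $P$. Since the Corollary already yields that $g$ is a special Sturmian morphism with $M_g = \beta(M_f) \in \Sigma^2$, what remains is only this choice-of-representative statement, i.e.\ that the particular $g$ coming from $\sigma$ is the standard representative of its conjugacy class rather than some other special Sturmian conjugate.

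First I would rewrite the two relevant products through the bisection. Because $\pi_{a \to b}$ is a letter projection, hence a morphism, and because $\sigma(abc) = w_\prec$, one obtains $g(b)g(c) = \pi_{a \to b}(\sigma(ab))\,\pi_{a \to b}(\sigma(c)) = \pi_{a \to b}(\sigma(abc)) = \pi_{a \to b}(w_\prec)$, while $g(c)g(b) = \pi_{a \to b}(\sigma(cab))$ is the image of the cyclic conjugate of $w_\prec$ obtained by moving the block $\sigma(c)$ to the front. In this way the whole question is reduced to a statement about the single word $w = f(ac)$ and its bisection $w_\prec$.

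Next I would feed in the hypothesis. Since $f$ is standard, the pair $(f(a),f(c))$ is a standard pair, so $w = f(ac)$ is itself a standard word over $\{a,c\}$, say $\{f(a)f(c),\,f(c)f(a)\} = \{Q\,ac,\,Q\,ca\}$ for a central word $Q$. The task then becomes to show that applying the bisection and then $\pi_{a \to b}$ carries the central word $Q$ to a central word $P$ over $\{b,c\}$ and matches the two cyclic orders to $P\,bc$ and $P\,cb$. Here the parity hypotheses ($|w|$ odd, $|w|_c$ even) are essential: they force the relabeling of the $c$'s to run $b,c,b,c,\dots,b,c$, an alternation that is itself centrally symmetric, which is what permits the palindromic center of $Q$ to survive the relabeling. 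I would also invoke the commutative diagram of Proposition \ref{commdiag}: the $\delta$-equivariance of $\beta$ encodes precisely the reversal symmetry that distinguishes the standard representative inside a conjugacy class, so that the standard representative on the $f$-side is transported to the standard representative on the $g$-side.

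The main obstacle is exactly this transport of palindromicity. The bisection $w \mapsto w_\prec$ is \emph{not} a morphism --- the image of a given $c$ depends on the running parity of the number of preceding $c$'s --- so the central-word structure of $w$ does not pass to $w_\prec$ automatically, and a careful combinatorial argument on the alternating relabeling of the $c$'s is required. I expect this to be the single genuinely delicate point. Once it is settled, the identities $g(b)g(c) = P\,bc$ and $g(c)g(b) = P\,cb$ exhibit $(g(b),g(c))$ as a standard pair, and hence $g$ as a special standard morphism.
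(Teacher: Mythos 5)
Your reduction is fine as far as it goes: indeed $g(b)g(c) = \pi_{a \to b}(\sigma(abc)) = \pi_{a \to b}(w_\prec)$, and the standard-pair criterion is a legitimate target. But the proposal stops exactly where the proof has to begin. The claim that the alternating relabeling of the $c$'s carries the central word $Q$ of $w = f(ac)$ to a central word $P$ over $\{b,c\}$ \emph{is} the content of the proposition, and you explicitly defer it (``I expect this to be the single genuinely delicate point''). Since the bisection is not a morphism, and since the $b$'s of $\pi_{a \to b}(w_\prec)$ come from two different sources (all the $a$'s together with the odd-indexed $c$'s), the centrality of $P$ cannot be inherited from the palindromicity of $Q$ by any positional symmetry; this transport is precisely what must be proved, and no mechanism for proving it is offered. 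Your two auxiliary appeals also do not supply it. The corollary on $\Sigma_2$ and $\Sigma^2$ only parametrizes \emph{conjugation classes} by incidence matrices; membership of $M_g$ in $\Sigma^2$ does not make the specific morphism $g$ special Sturmian (incidence matrices do not detect Sturmianity, let alone single out a representative), so more than a ``choice-of-representative statement'' remains. Likewise, Proposition \ref{commdiag} lives entirely at the matrix level, which is constant on conjugacy classes, so its $\delta$-equivariance carries no information whatsoever about \emph{which} conjugate $g$ is and cannot ``transport the standard representative.''

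The missing idea --- the one the paper actually uses --- sidesteps central words entirely. Set $u = g(bc) = \pi_{a \to b}(w_\prec)$; since $|w|_c$ is even, $u$ ends in $c$, and it suffices to show that the rotation $v = cuc^{-1}$ is the \emph{bad} conjugate of $u$, i.e., that $|v_1 \cdots v_m|_b \neq |u_1 \cdots u_m|_b = |g(b)|_b$ for $m = |g(b)|$. This is decided by a single letter: because $f$ is special standard, $f(a)$ is a prefix of $f(c)$, so $w = f(a)\,f(a)\,f(a^{-1}c)$; as $f(a)$ ends in $a$, the letter at position $m = 2|f(a)|$ is $w_m = a$, hence $u_m = b = v_{m+1}$, and since $v_1 = c$ one gets $|v_1 \cdots v_m|_b = |g(b)|_b - 1$. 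If you want to salvage the central-word route, you would have to establish the palindromicity transport as a standalone lemma (say, by induction along the generating morphisms of $f$), at which point the paper's one-letter prefix argument is considerably more economical.
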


\begin{proof}
As $f$ is special standard we have a decomposition $w = f(ac) = f(a) f(c) = tca sac$ with a negative (= plagal) standard word  $tca$ and a positive (= authentic) standard word  $sac$. We have $g(bc) = u_1u_2 \dots u_{|w|} \in \{b, c\}^\ast$ with $$u_k := \left \{ 
\begin{array}{lll} 
b & \mbox{if} & w_k = a, \\ 
b & \mbox{if} & w_k = c \mbox{ and } |w_1 \dots w_k|_c \mbox{ is odd} \\
c & \mbox{if} & w_k = c \mbox{ and }  |w_1 \dots w_k|_c  \mbox{ is even.} 
\end{array} \right.$$                                                                                                          
The final letter of $u$ is $c$, because $|w|_c$ is even, so by definition of $u$ above, the last letter $c$ is fixed. Let $v = cuc^{-1} \in \{b, c\}^\ast$ denote the result of conjugating $u$ with $c^{-1}$.  In order to show that $g$ is a special standard morphism, it is sufficient to show that $v = v_1v_2 \dots v_{|w|}$ is the bad conjugate of $u$. Let $m = |g(b)|$ denote the length of $g(b)$. It is sufficient to show that $|v_1 \dots v_m|_b$ differs from $|u_1 \dots u_m|_b = |g(b)|_b.$  Knowing that $f(a)$ is a prefix of $f(c)$ we write $w = f(a) f(a) f(a^{-1}c) = tca tca rac$ and we may conclude that $w_m = a$, and hence $u_m = b = v_{m+1}$. But in the light of $v_1 = c$ this implies $|v_1 \dots v_m|_b = |g(b)|_b - 1$, i.e. $v$ is the bad conjugate.
\end{proof}

\section{PWWF Substitutions and Automorphisms of $F_3$}\label{Family}

In the two-dimensional situation of well-formed modes we may interpret the Sturmian morphisms as positive automorphisms of the free group $F_2$. In the world of substitutions on words in three letters their analogues constitute different transformational concepts. The PWWF substitutions are well-adopted to the family of (non-singular) pairwise well-formed modes. Still there is a small subfamily of pairwise well-formed modes, where these substitutions are also automorphisms of the free group $F_3$. This final section is dedicated to their study.                                                                                                                                                                                                                                                  
To get a concrete idea about the role of $F_3$-automorphisms, we look into the Authentic and Triadic Divisions of the Phrygian mode (see Figure \ref{fig:Phrygian}). In the theory of well-formed modes one describes the species $baaa$ and $baa$ of the fifth and fourth as images of $a$ and $b$ under a word transformation $\tilde{g}(a) = baaa$, $\tilde{g}(b) = baa$. The left side of Figure \ref{fig:Phrygian} shows a decomposition of this compound transformation into three elementary ones. First, the fifth $a$ is filled with a fourth and a major step $a \mapsto ba$, then both fourths are filled with a minor third and a major step $b \mapsto ba$ and finally, both minor thirds are filled with a minor step and a major step $b \mapsto ba$.     

\begin{figure}
\begin{center}
\begin{minipage}{80 mm}
\resizebox*{8 cm}{!}
{\includegraphics{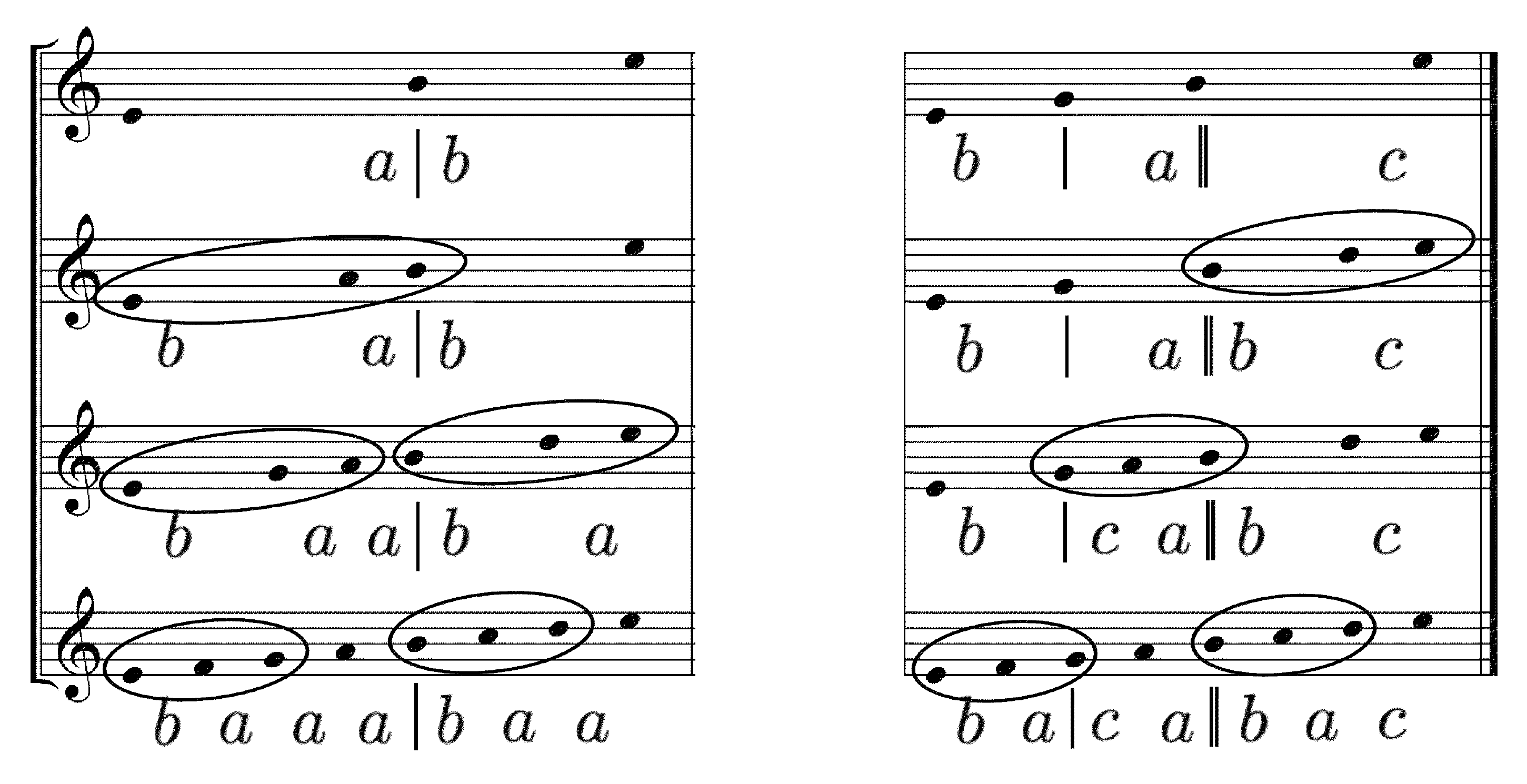}}%
\caption{Construction of the Authentic Phrygian and the Triadic Phrygian Minor modes through substitutions.}%
\label{fig:Phrygian}
\end{minipage}
\end{center}
\end{figure}

The right side of Figure \ref{fig:Phrygian} shows an analogous procedure for the construction of the Phrygian Minor mode $ba|ca||bac$. The fourth is filled with a minor third and a lesser major step: $c \mapsto bc$. Then the major third is filled with a lesser and and greater major step: $a \mapsto ca$ and finally both minor thirds are filled with a minor step followed by a greater major step: $b \mapsto ba$. This final act in the generation of the Phrygian Minor mode does not work analogously for the Ionian Major mode, because there we find two different species of the minor third: $ba$ and $ab$. 

The automorphism group $Aut(F_n)$ of the free group $F_n = \left<x_1, x_2, \dots, x_n \right>$ is redundantly generated by the elementary \emph{Nielsen transformations} (see \cite{Nielsen24}, \cite{CombinatorialGroupTheory}, p. 162 ff.), namely the letter transpositions $x_i \mapsto x_k, x_k \mapsto x_i$, cyclic letter permutations $x_1 \mapsto x_2 \mapsto \dots \mapsto x_n \mapsto x_1$, letter inversions $x_i \mapsto x_i^{-1}$ and the substitutions of the types $x_i \mapsto x_i x_k$ or  $x_i \mapsto x_k x_i$. The automorphisms of $F_3$, which potentially coincide with PWWF substitutions, are necessarily positive, so we don't need to consider letter inversions here. This leads to the following definition: 

\begin{definition} An authentic PWWF substitution is called morphic, if it is a positive automorphism of $F_3$.  \end{definition}
For two different letters $x, y \in \{a, b, c\}$ let $E_{x,y}, A_{xy}, P_{xy} \in Aut(F_3)$ denote the following positive automorphisms of the free group $F_3$: Let $z \in \{a, b, c\}$ denote the third letter, respectively.
$$\begin{array}{lll}
E_{xy}(x) = y, & E_{xy}(y) = x, & E_{xy}(z) = z,\\
A_{xy}(x) = xy, & A_{xy}(y) = y, & A_{xy}(z) = z,\\
P_{xy}(x) = yx, & P_{xy}(y) = y, & P_{xy}(z) = z.\\
\end{array}$$  Hence, a PWWF substitution $\sigma: \{a, b, c\}^\ast \to  \{a, b, c\}^\ast$ is morphic, if it can be written as a composition of a finite number of letter permutations $E_{xy}$ and substitutions of the types  $A_{xy}$ and/or $P_{xy}$. 
Now we inspect the seven conjugates of the Phrygian minor mode to which also the Ionian major mode belongs. The following proposition is thus the portrait of a very special conjugation class of modes. As in the two-letter case it contains bad conjugates.

\begin{proposition} Let $\sigma: \{a, b, c\}^\ast \to \{a, b, c\}^\ast$ with $\sigma(b) = ba$, $\sigma(a) = ca$ and $\sigma(c) = bac$ denote the PWWF substitution, which is associated with the Phrygian minor mode. The cycle of the single letter conjugations $ba|ca||bac \to ac|ab||acb \to                                                                                                                                                                                                                                                                                                                                                             ca|ba||cba  \to  ab|ac||bac  \to ac|ba||cab \to cb|ac||aba \to  ba|ca||bac$ contains two bad conjugates and --- accordingly --- five PWWF modes. Four of these PWWF modes are morphic. The exceptional good, but amorphous instance, is the Ionian major mode. These seven conjugates together with their associated projections under $\pi_{b \to c}$, $\pi_{c \to a} (\sigma)$ and  $\pi_{a \to b} (\sigma)$ are listed below: 
$$\begin{array}{|l||l|l|l||l|}
 \hline
\quad \mbox{authentic} & \quad \mbox{apotomic} & \quad \mbox{syntonic} \quad & \quad \mbox{apo-syntonic}  & \quad \mbox{transform.} \\
\quad \mbox{triadic mode} \quad & \quad \mbox{projection}  & \quad \mbox{projection}  & \quad \mbox{projection}  & \quad \mbox{type} 
\\ \hline
\quad ba|ca||bac \quad & \quad ca|cacac \quad & \quad baaa|baa \quad & \quad bbcb|bbc \quad & \quad \mbox{morphic}^{\ast \ast} \\
\quad ac|ab||acb & \quad ac|acacc & \quad aaab|aab & \quad bcbb|bcb &  \quad \mbox{morphic} \\
\quad ca|ba||cba & \quad ca|cacca & \quad aaba|aba & \quad cbbb|cbb & \quad \mbox{morphic} \\
\quad ab|ac||bac & \quad ac|accac &  \quad abaa|baa & \quad bbbc|bbc & \quad \mbox{morphic} \\ \hline
\quad ba|cb||aca & \quad ca|ccaca & \quad baab|aaa  & \quad bbcb|bcb & \quad \mbox{bad}^\ast \\
\quad ac|ba||cab & \quad ac|cacac &\quad aaba|aab & \quad bcbb|cbb & \quad \mbox{good}^\ast \\
\quad cb|ac||aba & \quad cc|acaca & \quad abaa|aba & \quad cbbc|bbb & \quad \mbox{bad}^{\ast \ast} \\ \hline
\end{array}$$\end{proposition}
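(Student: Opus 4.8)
The plan is to read the proposition as a finite verification resting on the machinery already assembled, with exactly one genuinely non-trivial ingredient: the amorphousness of the Ionian major mode.

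First I would fill in the three projection columns of the table. The seven rows are the cyclic conjugates of $w = bacabac$, and since each letter projection $\pi_{b\to c}, \pi_{c\to a}, \pi_{a\to b}$ commutes with cyclic rotation, the entries in a given column are simply the successive rotations of $\pi_{b\to c}(w)=cacacac$, of $\pi_{c\to a}(w)=baaabaa$, and of $\pi_{a\to b}(w)=bbcbbbc$. These three words are conjugates of Christoffel words of slopes $3/4$, $2/5$ and $2/5$, so every entry in the three columns is automatically a well-formed word; the step is pure mechanical substitution. Next I would classify good versus bad conjugates. The triadic divider sits at the fixed triad positions $1$--$3$--$5$--$8$, i.e.\ after the second and fourth step of each rotation, cutting every conjugate into a species of major third, of minor third, and of the perfect fourth. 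A rotation is a genuine PWWF mode precisely when these three arcs are valid species; in particular the three-letter fourth-arc must contain one step of each of the three sizes. Reading off the last three letters, the rows $bac, acb, cba, bac, cab$ are permutations of $abc$, whereas $ba|cb||aca$ and $cb|ac||aba$ yield the degenerate fourth-arcs $aca$ and $aba$. These two are the bad conjugates: equivalently, their associated substitutions violate Definition~\ref{PWWFsubstitution}, the apo-syntonic projection $g$ landing on the bad conjugate in the sense of the proof of Proposition~\ref{gstandard}. This gives five good modes and two bad, as claimed.

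For the good modes I would decide morphism-ness by Stallings foldings, which conveniently produces explicit Nielsen factorizations at the same time. Each good mode $p_1|p_2||p_3$ carries the PWWF substitution $\sigma$ with $\sigma(a)=p_1$, $\sigma(b)=p_2$, $\sigma(c)=p_3$ (the major-third, minor-third and fourth arcs); for the Phrygian minor this is the given $\sigma(b)=ba, \sigma(a)=ca, \sigma(c)=bac$. Such a positive substitution is an automorphism of $F_3$ iff the three image words generate $F_3$, which I test by folding the wedge of the three image-loops and asking whether the core graph collapses to the rose $R_3$. For the four morphic rows the folding does collapse to $R_3$, and the successive folds read off a factorization into the elementary automorphisms $E_{xy}, A_{xy}, P_{xy}$; for the Phrygian minor one obtains $\sigma = A_{ba}\circ P_{ac}\circ P_{cb}$, and the remaining three rows fold to $R_3$ in the same manner (e.g.\ $\{ca, ba, cba\}$ and $\{ab, ac, bac\}$ both collapse), certifying them as morphic.

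The hard part, and the reason the proposition is not a triviality, is showing that the Ionian major mode is good but \emph{amorphous}. Its substitution $\sigma_{\mathrm{Ion}}\colon a\mapsto ac,\ b\mapsto ba,\ c\mapsto cab$ has \emph{exactly the same} incidence matrix as the morphic Phrygian substitution, with determinant $1$; hence there is no obstruction at the abelianized level, and, because both substitutions share this matrix, neither can the induced maps on any lower-central (nilpotent) quotient see the difference. The non-surjectivity must therefore be detected non-abelianly. Folding the image-set $\{ac, ba, cab\}$, I expect to find that the wedge is \emph{already folded} --- no two of its seven edges share a label together with an endpoint --- so the core graph has five vertices and is not even a cover of $R_3$; by Stallings' theorem $\langle ac, ba, cab\rangle$ is then a proper, infinite-index subgroup of $F_3$, so $\sigma_{\mathrm{Ion}}$ is not surjective and not an automorphism. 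I would place this directly against the Phrygian image-set $\{ca, ba, bac\}$, whose folding \emph{does} reach $R_3$. This is the precise word-theoretic shadow of the observation that the Ionian mode presents two distinct species $ab$ and $ba$ of the minor third, blocking the final elementary substitution. Certifying the non-collapse cleanly --- establishing ``folded but not the rose'' rather than merely failing to exhibit a factorization --- is the main obstacle of the proof.
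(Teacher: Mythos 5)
Your proposal is correct in substance and reaches the two genuinely load-bearing claims by a different route than the paper. For the count of bad conjugates, the paper does not argue row-by-row: it invokes Proposition~\ref{gstandard} to conclude that the conjugation cycles of the apotomic and apo-syntonic projections run ``in sync'' (both bad at the same row, marked bad$^{\ast\ast}$), so that exactly one further row (bad$^{\ast}$, with bad syntonic projection) can fail, giving two bad and five good conjugates. For morphicity, the paper simply exhibits explicit factorizations such as $A_{ba}P_{ac}P_{cb}E_{ab}(b|a||c)=ba|ca||bac$; your $\sigma=A_{ba}\circ P_{ac}\circ P_{cb}$ is the same factorization with the swap absorbed into the seed, and is correct. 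Where you genuinely diverge is the Ionian mode: the paper's proof here is a one-sentence appeal to the coexistence of the factors $ab$ and $ba$ (the case analysis ruling out a positive elementary last move appears only in the proof of Proposition~\ref{morphiclist}), and that style of argument only excludes membership in the monoid generated by $E_{xy},A_{xy},P_{xy}$ --- it silently relies on identifying that monoid with the positive automorphisms. Your Stallings-folding argument is stronger and does check out: the wedge on $\{ac,\,ba,\,cab\}$ is indeed already folded (five vertices, seven edges, all valences at least two), so it is the core graph of $\langle ac, ba, cab\rangle$, is not the rose and not a cover; hence the subgroup is proper of infinite index and $\sigma_{\mathrm{Ion}}$ is not an automorphism of $F_3$ at all. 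This closes rigorously what the paper leaves as a gesture, and via Hopficity the same folding test (images generate $F_3$) certifies the four morphic rows, e.g.\ $(ba)^{-1}(bac)=c$ recovers the generators for the Phrygian set.

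Three fixable slips. First, your good/bad criterion ``the fourth-arc must contain one step of each size'' is asserted, not proved, and is not a valid test in general: in the family of Proposition~\ref{morphiclist} the bad$^{\ast\ast}$ fourth-arc $(a^{k+1}ba^{k+1}c)^{n-1}a^{k+1}ba^{k+1}$ contains all three letters as soon as $n\ge 2$. Here it happens to flag the right two rows, but the honest finite check is to verify directly which of the projected divided modes in your columns are bad conjugates (e.g.\ $baab|aaa$ is the Locrian bad conjugate), or to invoke the synchronization of Proposition~\ref{gstandard} as the paper does; relatedly, your parenthetical misattributes the failure of $ba|cb||aca$ to the apo-syntonic projection $g$, when it is the syntonic projection $\tilde g$ that lands on the bad conjugate there. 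Second, folding certifies generation, hence automorphism, but does not by itself ``read off'' a factorization into the \emph{positive} elementary maps; that is a separate (easy, and in your case correctly supplied) step. Third, the remark that equal incidence matrices force agreement on all lower-central quotients is wrong as stated --- the induced maps on $F_3/\gamma_3$ can differ; the correct point is that any positive endomorphism with unimodular abelianization already induces automorphisms on every free nilpotent quotient, so no nilpotent invariant can detect amorphousness, which is exactly why your non-abelian folding certificate is the right tool.
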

\begin{proof}
In the left column of the table $ba|ca||bac $ undergoes the full cycle of letter-by-letter conjugations. In parallel the projections $\pi_{b \to c} (\sigma)$, $\pi_{c \to a} (\sigma)$ , $\pi_{a \to b} (\sigma)$ run through their corresponding conjugations. By virtue of proposition \ref{gstandard} the conjugations of the apotomic and the apo-syntonic projections are ``in sync", i.e. they start with special standard modes (marked as $\mbox{morphic}^{\ast \ast}$) and they end with bad modes (marked as $\mbox{bad}^{\ast \ast}$). As a consequence there are only two bad conjugates among the triadic modes. The generation of the four morphic modes --up to letter permutations -- is given below: 
$$\begin{array}{lllllll}
A_{ba} P_{ac}  P_{cb} (b|a||c) & = & A_{ba} P_{ac} (b|a||bc) & = & A_{ba}(b|ca||bc) & = & ba|ca||bac \\
P_{ca} A_{ab} P_{bc} (c|a||b) & = & P_{ca} A_{ab} (c|a||cb) & = & P_{ca}(c|ab||cb) & = & ac|ab||acb \\
A_{ba} P_{ac} A_{cb} (a|b||c) & = & A_{ba} P_{ac} (a|b||cb)  & = & A_{ba}(ca|b||cb) & = & ca|ba||cba \\
P_{ca} A_{ab} P_{cb} (b|a||c) & = & P_{ca} A_{ab} (b|a||bc) & = & P_{ca}(ab|c||bc) & = & ab|ac||bac \\
\end{array}$$ The obstacle for the ``good" PWWF Ionian major mode to be generated by an $F_3$-automorphism is the co-existence of the two factors $ab$ and $ba$. 
 \end{proof}
 
 In addition to the $\mbox{bad}^\ast$ Locrian mode (with a bad syntonic projection) there is the $\mbox{bad}^{\ast \ast}$ Dorian minor mode (with bad apotomic and  apo-syntonic projections), whose structural defects have been discussed in 19th-century treatises, such as Moritz Hauptmann's 1853 \emph{Die Natur der Harmonik und der Metrik}.  

We show now that a similar picture arises in connection with a certain family $\mathcal{M}$ of morphic PWWF modes and we conclude the article with the conjecture that this family actually exhausts the morphic PWWF modes entirely.
It is useful to start the investigation of this family from the authentic standard modes, whose bisecting transformations then yield the associated PWWF modes. The general form of the standard morphism $f$, generating the single-divider mode $f(a)|f(c)$  is $f = G^kDG^{2n}$ with $n > 0$ and $k \ge 0$. The corresponding  apo-syntonic conversion of $f$, the standard morphism $g$, generating the first of the two double divider modes $g(b)|g(c)$  is $g = G^{2k+2}DG^{n-1}$.
Thus, we have the two sets $$\mathcal{F} = \{G^kDG^{2n} \, | \, n  > 0, k  \ge 0 \} \mbox{   and   } \mathcal{G} = \{G^{2n}DG^{k} \, | \, n  > 0, k  \ge 0 \}$$ together with the \emph{apo-syntonic conversion} $\theta: \mathcal{F} \to \mathcal{G}$, where $\theta(G^kDG^{2n}) := G^{2k+2}DG^{n-1}$.
Furthermore, we consider reversal map, i.e. the unique anti-auto\-morphism of $rev: \left<G, D \right> \to \left<G, D \right>$ fixing both $G$ and $D$. This map $rev$ sends $\mathcal{F}$ to $\mathcal{G}$ and vice versa.

The following proposition specifies the commutative diagram for matrices in proposition \ref{commdiag}  to the elements of the sets $\mathcal{F}$ and $\mathcal{G}$:
\begin{proposition} $\theta \circ rev \circ \theta = rev.$
\end{proposition}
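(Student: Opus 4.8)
The plan is to prove the identity by a direct computation that tracks the two integer exponents parametrizing an element of $\mathcal{F}$. I would fix an arbitrary $f = G^k D G^{2n} \in \mathcal{F}$ with $n > 0$ and $k \ge 0$, and chase it through the three maps, verifying at each stage that the intermediate word lies in the correct set and is presented in the normal form that the next map requires. Since all four maps involved are determined by their effect on such normal forms, establishing the equality on a generic $f$ suffices.

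First I would compute $\theta(f) = G^{2k+2} D G^{n-1}$; because $2k+2 = 2(k+1)$ with $k+1 > 0$ and $n-1 \ge 0$, this is a legitimate element of $\mathcal{G}$. Next I would apply $rev$. As $rev$ is the anti-automorphism fixing $G$ and $D$, it merely reverses the order of the syllables, so $rev(G^p D G^q) = G^q D G^p$; hence $rev(\theta(f)) = G^{n-1} D G^{2k+2}$, which lands back in $\mathcal{F}$. The one point demanding care — and essentially the only obstacle — is the re-parametrization needed to apply $\theta$ a second time: I must read $G^{n-1} D G^{2k+2}$ in the defining normal form $G^{k'} D G^{2n'}$ of $\mathcal{F}$, i.e. with an even final exponent. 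Matching the syllables gives $k' = n-1$ and $2n' = 2k+2$, so $n' = k+1$, and the constraints $k' \ge 0$ (from $n > 0$) and $n' > 0$ (automatic) confirm that $\theta$ is indeed defined on this word.

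Finally I would apply $\theta$ to $G^{k'} D G^{2n'}$, obtaining $\theta(G^{k'} D G^{2n'}) = G^{2k'+2} D G^{n'-1} = G^{2(n-1)+2} D G^{(k+1)-1} = G^{2n} D G^{k}$. On the other hand $rev(f) = rev(G^k D G^{2n}) = G^{2n} D G^{k}$, so the two sides coincide for every $f \in \mathcal{F}$, which proves $\theta \circ rev \circ \theta = rev$. The whole argument is a bookkeeping exercise in the exponents: the substitution $(k,n) \mapsto (k',n') = (n-1,\, k+1)$ induced by $rev \circ \theta$, when fed back into the doubling-and-decrement pattern built into $\theta$, produces exactly the plain syllable-reversal $rev$.
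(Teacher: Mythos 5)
Your proof is correct and takes essentially the same approach as the paper's: a direct computation chasing the two exponents through $\theta$, $rev$, $\theta$ in normal form, with your explicit re-parametrization $(k', n') = (n-1, k+1)$ making rigorous the step the paper performs implicitly. In fact your careful bookkeeping yields the correct final word $G^{2n}DG^{k}$, which exposes a typo in the paper's last displayed line (it prints $G^{2n}DG^{2k+2}$ where $G^{2n}DG^{k}$ is meant).
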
  
\begin{proof} Although the relation is a corollary of proposition \ref{commdiag} we give a direct proof here: $$\begin{array}{lll} \theta(rev(\theta(G^kDG^{2n}) & = & \theta(rev(G^{2k+2}DG^{n-1}))\\ & = & \theta(G^{n-1}DG^{2k+2}) \\ 
& = & G^{2n}DG^{2k+2} = rev(G^kDG^{2n}) \end{array}$$
\end{proof}

The third corresponding special Sturmian morphism $\tilde{g}$, generating the syntonic projection $\tilde{g}(a|b)$ of our PWWF mode  has the following form: $$\tilde{g} = \tilde{\theta}(f) = \tilde{\theta}(G^kDG^{2n}) = G^k \tilde{G}^{k+2}DG^{n-1}.$$ 
Thus, the morpism $\tilde{g}$ is preceded by precisely $k+2$ conjugate morphisms in the Zarlino ordering, namely by $G^{k+l} \tilde{G}^{k+2-l}DG^{n-1}$ for $l = 1, \dots, k+2$.  For the corresponding incidence matrices we have:
$$\begin{array}{ll}M_f = \m {k+1} {2n(k+1)+k} {1} {2n + 1} \quad & \mbox{and} \quad
 M_g = M_{\tilde{g}} = \m {2k+3} { n(2k+3) -1} {1} {n} \end{array}$$ 

\begin{proposition}\label{morphicPWWF} Consider the authentic PWWF three-letter mode $$v_{k, n} := a^kba|a^kca||(a^kba \, a^kca)^{n-1}a^kba \, a^kc$$ Its apotomic, apo-syntonic and syntonic projections are $\pi_{b \to c} (v_{k, n}) = f(a|c)$, $\pi_{a \to b} (v_{k, n}) = g(b|c)$ and $\pi_{c \to a} (v_{k, n}) = \tilde{g}(a|b)$, respectively.
\end{proposition}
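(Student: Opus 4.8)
The plan is to establish the three claimed identities by direct computation, organized so that each projection reduces to projecting a few repeated blocks and comparing with the corresponding morphism applied to the generating pair. First I would abbreviate $X = a^kba$ and $Y = a^kca$ and observe that the three segments of $v_{k,n}$ concatenate to
$$v_{k,n} = X\,Y\,(XY)^{n-1}\,X\,a^kc = (XY)^n\,X\,a^kc.$$
This compact form already exhibits $|v_{k,n}|_b = |v_{k,n}|_c = n+1$ with $|v_{k,n}|_a$ odd, confirming the correct PWWF shape, and it lets me project block by block.

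Next I would record the explicit generators on each of the three two-letter alphabets, in each case fixing the frequent letter and prepending to the rare one. On $\{a,c\}$: $G\colon a\mapsto a,\ c\mapsto ac$ and $D\colon a\mapsto ca,\ c\mapsto c$, so $G^m(c)=a^mc$. On $\{b,c\}$: $G\colon b\mapsto b,\ c\mapsto bc$ and $D\colon b\mapsto cb,\ c\mapsto c$, so $G^m(c)=b^mc$. On $\{a,b\}$: $G\colon a\mapsto a,\ b\mapsto ab$, $D\colon a\mapsto ba,\ b\mapsto b$, together with the conjugate $\tilde{G}\colon a\mapsto a,\ b\mapsto ba$, so $G^m(b)=a^mb$ and $\tilde{G}^m(b)=ba^m$. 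These are exactly the forms forced by the incidence matrices $M_f$ and $M_g=M_{\tilde{g}}$ recorded above.

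The apotomic and apo-syntonic cases are the routine ones. Applying $\pi_{b \to c}$ fixes $Y$ and $a^kc$ and sends $X\mapsto a^kca$, giving $\pi_{b \to c}(v_{k,n})=(a^kca)^{2n+1}a^kc$; unwinding $f=G^kDG^{2n}$ gives $f(a)=G^k(ca)=a^kca$ and $f(c)=G^k((ca)^{2n}c)=(a^kca)^{2n}a^kc$, so $f(a)f(c)$ agrees. (This identity is in fact forced by the bisection construction, since $v_{k,n}$ is the bisection of $w=f(ac)$ and $\pi_{b \to c}$ reverses the $b/c$-splitting by definition.) Likewise $\pi_{a \to b}$ sends $X\mapsto b^{k+2}$, $Y\mapsto b^kcb$, $a^kc\mapsto b^kc$, whence $\pi_{a \to b}(v_{k,n})=(b^{2k+2}cb)^n b^{2k+2}c$, while unwinding $g=G^{2k+2}DG^{n-1}$ on $\{b,c\}$ yields $g(b)=b^{2k+2}cb$ and $g(c)=(b^{2k+2}cb)^{n-1}b^{2k+2}c$, whose product matches.

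The syntonic case is the one I expect to be the main obstacle, since it is the only projection driven by the conjugate generator $\tilde{G}$ and the exponent $k+2$ in $\tilde{g}=G^k\tilde{G}^{k+2}DG^{n-1}$ must be tracked precisely. Here $\pi_{c \to a}$ sends $X\mapsto a^kba$, $Y\mapsto a^{k+2}$, $a^kc\mapsto a^{k+1}$, giving $\pi_{c \to a}(v_{k,n})=(a^kba^{k+3})^n a^kba^{k+2}$. On the morphism side I compute from the inside out: $D(G^{n-1}(a))=ba$, then $\tilde{G}^{k+2}(ba)=\tilde{G}^{k+2}(b)\,\tilde{G}^{k+2}(a)=ba^{k+2}\cdot a=ba^{k+3}$, and finally $G^k(ba^{k+3})=a^kba^{k+3}$, so $\tilde{g}(a)=a^kba^{k+3}$; the analogous computation on $b$ gives $\tilde{g}(b)=(a^kba^{k+3})^{n-1}a^kba^{k+2}$. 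The delicate bookkeeping is that it is exactly $\tilde{G}^{k+2}(b)=ba^{k+2}$ that, together with the single $a$ produced by $D$, yields the $a^{k+3}$-blocks, while the outer $G^k$ leaves all $a$-powers unchanged. Once this is verified, $\tilde{g}(a)\tilde{g}(b)=(a^kba^{k+3})^n a^kba^{k+2}$ matches $\pi_{c \to a}(v_{k,n})$, which completes the proof.
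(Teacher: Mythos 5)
Your proposal is correct and follows essentially the same route as the paper: a direct unwinding of $f = G^kDG^{2n}$, $g = G^{2k+2}DG^{n-1}$ and $\tilde{g} = G^k\tilde{G}^{k+2}DG^{n-1}$ from the inside out, compared block by block against the letter projections of $v_{k,n}$, with all intermediate words ($f(a)=a^kca$, $g(b)=b^{2k+2}cb$, $\tilde{g}(a)=a^kba^{k+3}$, etc.) agreeing with the paper's. Your only additions --- the compact form $(XY)^nXa^kc$ and the explicit computation of the projection side, which the paper merely asserts --- are harmless extra bookkeeping, not a different method.
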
  
\begin{proof}
$$\begin{array}{lll}
f(a|c) & = & G^kDG^{2n}(a|c) = G^kD(a|a^{2n}c) = G^k(ca|(ca)^{2n}c) \\
& = & a^kca|(a^kca)^{2n}a^kc = \pi_{b \to c} (v_{k, n})\\

g(b|c) & = & G^{2k+2}DG^{n-1}(b|c) = G^{2k+2}D(b|b^{n-1}c) = G^{2k+2}(cb|(cb)^{n-1}c) \\
& = & b^{2k+2}cb|(b^{2k+2}cb)^{n-1}b^{2k+2}c = \pi_{a \to b} (v_{k, n}). \\
\tilde{g}(a|b) & = & G^k\tilde{G}^{k+2}DG^{n-1}(a|b) = G^k\tilde{G}^{k+2}D(a|a^{n-1}b) = G^k\tilde{G}^{k+2}(ba|(ba)^{n-1}b), \\
& = & G^k\tilde{G}^{k+2}(ba|(ba)^{n-1}b) = G^k(ba^{k+3}|(ba^{k+3})^{n-1}ba^{k+2})\\
& = & a^k ba^{k+3}|(a^k ba^{k+3})^{n-1} a^k ba^{k+2} = \pi_{c \to a} (v_{k, n}), \\

\end{array}$$ 
\end{proof}

The subsequent proposition provides an explicit portrait of the full conjugation class of the authentic PWWF mode $v_{k, n}.$
\begin{proposition}\label{morphiclist} The table below lists all letter-by-letter conjugations of the authentic PWWF mode $v_{k, n}$ and characterizes them as morphic, good or bad. The segment between the $\mbox{bad}^{\ast \ast}$ mode (with bad apotomic and apo-syntonic projections) and the $\mbox{bad}^\ast$-mode (with bad syntonic projection) is exclusively occupied by morphic modes. The opposite segment between the $\mbox{bad}^\ast$-mode and the $\mbox{bad}^{\ast \ast}$ mode is exclusively occupied by good modes: 
$$\begin{array}{llllll}
\hline 
a^kba & | & a^kca & || & (a^kba \, a^kca)^{n-1}(a^kba)(a^kc)  & \mbox{morphic}^{\ast \ast}\\
a^{k-1}ba^2 & | & a^{k-1}ca^2 & || & (a^{k-1}ba^2 \, a^{k-1}ca^2)^{n-1}(a^{k-1}ba^2)(a^{k-1}ca) & \mbox{morphic}\\
\dots \\
a^{k-l}ba^{l+1} & | & a^{k-l}ca^{l+1}  & || & (a^{k-l}ba^{l+1}  \, a^{k-l}ca^{l+1} )^{n-1}(a^{k-l}ba^{l+1})(a^{k-l}ca^l) & \mbox{morphic}\\
\dots \\
ba^{k+1} & | & ca^{k+1}  & || & (ba^{k+1}  \, ca^{k+1} )^{n-1}(ba^{k+1})(ca^k) & \mbox{morphic}\\
a^{k+1}c & | & a^{k+1}b  & || & (a^{k+1}c  \, a^{k+1}b)^{n-1}(a^{k+1}c)(a^kb) & \mbox{morphic}\\
\dots \\
ca^{k+1} & | & ba^{k+1}  & || & (ca^{k+1} \, ba^{k+1})^{n-1}(ca^k)(ba^{k+1}) & \mbox{morphic}\\
a^{k+1}b & | & a^{k+1}c  & || & (a^{k+1}b \, a^{k+1}c)^{n-1}(a^kb)(a^{k+1}c) & \mbox{morphic}\\
\dots \\
ba^{k+1} & | & ca^{k+1}  & || & (ba^{k+1} \, ca^{k+1})^{n-2}(ba^{k+1} \, ca^k)(b a^{k+1} c a^{k+1}) & \mbox{morphic}\\
a^{k+1}c & | & a^{k+1}b  & || & (a^{k+1}c \, a^{k+1}b)^{n-2}(a^{k+1}c \, a^kb)(a^{k+1}c a^{k+1}b) & \mbox{morphic}\\
\dots \\
ca^{k+1} & | & ba^{k+1}  & || & (c a^{k+1} \, b a^{k+1})^{n-2}(c a^k \, b a^{k+1})(ca^{k+1} ba^{k+1}) & \mbox{morphic}\\
a^{k+1}b & | & a^{k+1}c  & || & (a^{k+1}b \, a^{k+1}c)^{n-2}(a^kb \, a^{k+1}c)(a^{k+1}b a^{k+1}c) & \mbox{morphic}\\
\dots \\
aba^k & | & a c a^k & || & b a^k a c a^{k} (a b a^k a c a^{k})^{n-1}  & \mbox{morphic}\\
\hline 
ba^{k+1} & | & c a^kb & || & (a^{k+1} c a^{k+1}b)^{n-1} a^{k+1} c a^{k+1} & \mbox{bad}^{\ast}\\
\hline 
a^{k+1}c & | & a^kba & || & (a^k c a^{k+1}ba)^{n-1} a^k c a^{k+1}b & \mbox{good}^{\ast}\\
\dots \\
aca^k & | & ba^{k+1} & || & (c a^{k+1}ba^{k+1})^{n-1}c a^{k+1}ba^k & \mbox{good} \\
\hline 
ca^kb & | & a^{k+1}c & || & (a^{k+1}ba^{k+1}c)^{n-1}a^{k+1}ba^{k+1} & \mbox{bad}^{\ast \ast} \\
\hline 
\end{array}$$  
\end{proposition}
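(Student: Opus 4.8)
The plan is to read the entire table through the three letter-projections, whose explicit standard-word forms are already supplied by Proposition~\ref{morphicPWWF}, and to reduce every assertion — completeness of the list, the location of the two bad rows, and the morphic/good dichotomy on the two arcs — to facts about two-letter standard words together with the $F_3$-lifting provided by the Nielsen generators $E_{xy}$, $A_{xy}$, $P_{xy}$. The organizing observation is that each projection $\pi_{b\to c}$, $\pi_{a\to b}$, $\pi_{c\to a}$ is a length-preserving relabeling, so a single letter-by-letter conjugation of the three-letter word induces \emph{one} simultaneous cyclic shift on each of the three projected standard words $f(ac)$, $g(bc)$, $\tilde g(ab)$.

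First I would fix the length $L = |v_{k,n}| = 2nk+4n+2k+3$ (odd), read off from $M_f$ or by direct count, so that $v_{k,n}$ has exactly $L$ cyclic conjugates. To prove completeness and correct ordering of the table I would verify that consecutive rows differ by moving the leading letter to the tail; the interior rows inside each ``$\dots$''-block are uniform (a leading $a$ migrating through the $a$-run), so the only genuine checks are at the block boundaries and at the small-parameter degeneracies (e.g.\ $n=1$ or $k=0$, where some indicated ranges become empty). Counting the rows then confirms that there are exactly $L$ of them.

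Second, I would locate the two bad rows via the projections. Each of the three standard words $f(ac)=(a^kca)^{2n+1}a^kc$, $g(bc)=(b^{2k+2}cb)^{n}b^{2k+2}c$, $\tilde g(ab)=(a^kba^{k+3})^n a^kba^{k+2}$ has a unique bad conjugate, namely the single rotation excluded by the prefix-count criterion used in the proof of Proposition~\ref{gstandard}; hence exactly one shift index renders each projection bad. By Proposition~\ref{gstandard} together with the commutative diagram of Proposition~\ref{commdiag}, the apotomic projection $f$ and the apo-syntonic projection $g$ become bad at the \emph{same} index — this is the $\mathrm{bad}^{\ast\ast}$ row $ca^kb\,|\,a^{k+1}c\,||\,\cdots$ — while the syntonic projection $\tilde g$ becomes bad at a different index, the $\mathrm{bad}^{\ast}$ row $ba^{k+1}\,|\,ca^kb\,||\,\cdots$. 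Computing these two indices explicitly from the forms above pins the two bad rows at their tabulated positions and shows that every other conjugate has all three projections good, i.e.\ is a genuine PWWF (``good'') mode; this already splits the remaining $L-2$ conjugates into the two arcs.

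Third, I would settle the morphic/good characterization on each arc. On the arc from $\mathrm{morphic}^{\ast\ast}$ to the last morphic row before $\mathrm{bad}^{\ast}$ I would produce an explicit factorization of each mode's associated substitution into the positive generators $E_{xy}$, $A_{xy}$, $P_{xy}$, generalizing the four displayed factorizations of the Phrygian case: a power of an $A_{xy}$ supplies the $a^k$-padding and single $A/P$ steps carry out the thirds-filling of Figure~\ref{fig:Phrygian}, with the passage from one morphic row to the next realized by pre- or post-composing with one further elementary automorphism, so that morphicity propagates along the arc from the base mode $v_{k,n}$. On the complementary arc I would show that the modes, although good by the second step, are \emph{amorphous}: as in the Ionian instance, each contains the two factors $ab$ and $ba$ simultaneously, and I would argue that this coexistence is incompatible with any positive-automorphism factorization over $F_3$, since a composition of $A_{xy}$, $P_{xy}$, $E_{xy}$ forces a consistent left/right adjacency of the letters $a$ and $b$ relative to the last applied generator. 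I expect this third step to be the main obstacle — specifically, exhibiting a uniform Nielsen factorization valid for all $k,n$ at every position inside the morphic arc, and turning the $ab$/$ba$ obstruction into a genuine non-existence argument rather than the mere failure of particular attempts.
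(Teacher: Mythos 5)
Your steps 1 and 2 (enumerating the $L$ conjugates and locating the two bad rows through the unique bad conjugates of $f(ac)$, $g(bc)$, $\tilde g(ab)$, synchronized via Propositions \ref{commdiag} and \ref{gstandard}) are sound, and in fact more explicit than the paper, whose proof takes the table's completeness and the bad positions essentially for granted. Your step 3(i) also matches the paper: it exhibits the single factorization $v_{k,n} = P_{ba}^k P_{ca}^k A_{ba} P_{ac}(P_{cb}P_{ca})^{n-1}P_{cb}E_{ab}(a|b||c)$ and declares the remaining morphic rows analogous, much as you propose (though note the paper gives an independent factorization per row rather than your ``propagate by one further elementary automorphism per conjugation step'' -- a presentational difference only).

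The genuine gap is in step 3(ii), exactly where you anticipated trouble, and your proposed mechanism would in fact fail. The claimed obstruction -- that coexistence of the factors $ab$ and $ba$ is incompatible with a positive-automorphism factorization because the last generator forces consistent adjacency of $a$ and $b$ -- is false as a principle: a last generator need not be an $a/b$-production at all, and even $a/b$-productions tolerate both factors (e.g.\ $P_{ba}(ba)=aba$ contains $ab$ and $ba$). Decisively, the paper's own proof shows the good modes \emph{are} images under positive automorphisms: the general good row equals $P_{ca}^{k-l}P_{ba}^{k-l}A_{ca}^{l}A_{ba}^{l}(\gamma_{0,n})$, where $\gamma_{0,n}= ac\,|\,ba\,||\,(caba)^{n-1}cab$. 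So no one-shot adjacency argument on the good modes themselves can establish non-morphicity. What is actually needed, and what the paper does, is a recursive Nielsen de-substitution: a case analysis over all eight productions $A_{xy}, P_{xy}$ showing that the only generators that can be peeled off a good mode are the $a$-productions, that every peeling path terminates at the triple $\gamma_{0,n}$, and that $\gamma_{0,n}$ admits no de-substitution whatsoever; since $\gamma_{0,n}$ is not a letter permutation of $(a|b||c)$, non-morphicity follows. The coexistence of $ab$ and $ba$ is, in the paper, only a heuristic gloss on the irreducibility of this core -- your proof must relocate the obstruction from the top-level word to the irreducible triple $\gamma_{0,n}$, and must handle the productions involving $c$ (excluded because $b$ and $c$ are never adjacent within any component of the triple) that your sketch ignores.
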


\begin{proof}
The following calculation shows that $v_{k, n}$ is morphic. The calculations for the other morphic modes are analogous:

$$\begin{array}{lll}
P_{ba}^k P_{ca}^k A_{ba} P_{ac} (P_{cb} P_{ca})^{n-1} P_{cb} E_{ab} (a|b||c) 
& = & P_{ba}^k P_{ca}^k A_{ba} P_{ac} (P_{cb} P_{ca})^{n-1} P_{cb} (b|a||c) \\
& = & P_{ba}^k P_{ca}^k A_{ba} P_{ac} (P_{cb} P_{ca})^{n-1} (b|a||bc) \\
& = & P_{ba}^k P_{ca}^k A_{ba} P_{ac} (b|a||(ba)^{n-1}bc) \\
& = & P_{ba}^k P_{ca}^k A_{ba} P_{ac} (b|a||(ba)^{n-1}bc) \\
& = & P_{ba}^k P_{ca}^k A_{ba} (b|ca||(bca)^{n-1}bc) \\
& = & P_{ba}^k P_{ca}^k(ba|ca||(ba \, ca)^{n-1}ba \, c) \\
& = & P_{ba}^k(ba|a^kca||(ba \, a^kca)^{n-1}ba \, a^kc) \\
& = & a^kba|a^kca||(a^kba \, a^kca)^{n-1}a^kba \, a^kc \\
& = & v_{k, n}
\end{array}$$

We try to write the good$^{\ast\ast}$-mode $\gamma_{k, n} = a^{k+1}c | a^kba || (a^k c a^{k+1}ba)^{n-1} a^k c a^{k+1}b$ as an image $f(a|b||c)$ under an automorphism $f = f_m f_{m-1} \dots f_{1} f_{0}$, where the $f_i (i = 1, .., m)$ are supposed to be productions of the type $A_{xy}$ or $P_{xy}$ and where $f_0$ is letter permutation. First we observe that $f_m$, the last of these morphisms, cannot be a production of $b$'s or $c$'s: First of all, $A_{bc}$, $A_{cb}$, $P_{bc}$ and $P_{bc}$ are excluded because there are no letters $c$ and $b$ neighboring each other (even in the case $k = 0$). But furthermore, not all instances of the letter $a$ are followed or preceded by either exclusively $b$ or exclusively $c$, and so also the productions $A_{ac}$, $A_{ab}$, $P_{ac}$ and $P_{ac}$ can be excluded excluded. The only remaining possibilities are productions of the letter $a$. Among these the append-transformations $A_{ba}$ and  $A_{ca}$ both excluded, as the single-dividiver prefix $a^{k+1}c$ of $\gamma_{k, n}$ ends on $c$ and the double-divider suffix $(a^k c a^{k+1}ba)^{n-1} a^k c a^{k+1}b$ ends on $b$. For $k > 0$ the prepend-transformations $P_{ba}$ and  $P_{ca}$ are suitable in order to produce $\gamma_{k, n}$ from shorter words. To be more precise: $P_{ba}$ and  $P_{ca}$ commute with each other and both can be applied $k$ times in any order to the triple $\gamma_{0, n} = ac | ba || (c aba)^{n-1} cab$ to produce $\gamma_{k, n}$. Here is one of them: 

$$\begin{array}{lll}
a^{k+1}c | a^kba || (a^k c a^{k+1}ba)^{n-1} a^k c a^{k+1}b
& = & P_{ba}^k(a^{k+1}c | ba || (a^k c aba)^{n-1} a^k c ab)\\                                                                                                                                                                                                                                                                                                                                                                                                                                            
& = & P_{ca}^k(P_{ba}^k(ac | ba || (c aba)^{n-1} cab))\\
\end{array}$$
                                                                                                                       
A closer look at $\gamma_{0, n}$ shows that it is not an image of a shorter word-triple under any of the 8 transformations of the type $A_{xy}$ or $P_{xy}$.  

The similar line of argument works for any of the good modes. For $0 < l \le k$ the general form of a good mode is $$\begin{array}{ll} & a a^{k-l}ca^l | a^{k-l}ba^{l}a || (a^{k-l} c a^{l} a a^{k-l} b a^{l} a)^{n-1} a^{k-l} c a^{l} a a^{k-l} b a^{l} \\ = & P_{ca}^{k-l}(P_{ba}^{k-l}(A_{ca}^l(A_{ba}^l(\gamma_{0, n}))))\end{array}$$      

So all the good modes are images of the mode $\gamma_{0, n}$. And this is the only possibility to generate them from a shorter triple. \end{proof}

\begin{conjecture} Consider an authentic PWWF substitution $\sigma$ in the sense of definition \ref{PWWFsubstitution}. The substitution $\sigma$ is morphic (i.e. is a positive automorphism of $F_3$) iff its generated mode $\sigma(a) | \sigma(b) || \sigma(c)$ --- up to letter permutation --- is an instance of a morphic mode in proposition \ref{morphiclist}.
\end{conjecture}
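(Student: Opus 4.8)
The plan is to prove the two implications separately, treating the forward (``if'') direction as essentially settled and concentrating the effort on the converse. For the forward direction, suppose the generated mode $\sigma(a)|\sigma(b)||\sigma(c)$ is, up to a letter permutation $\rho$, one of the modes in the morphic segment of Proposition \ref{morphiclist}. Since every letter permutation is a product of transpositions $E_{xy}$, and the proof of Proposition \ref{morphiclist} already exhibits an explicit factorization of each morphic-segment mode into generators of type $A_{xy}$, $P_{xy}$ and $E_{xy}$, composing with $\rho$ yields such a factorization for $\sigma$; hence $\sigma$ is a positive automorphism of $F_3$, i.e. morphic.

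For the converse, assume $\sigma$ is morphic and write $\sigma = g_m \circ \cdots \circ g_1$ as a composition of elementary positive Nielsen transformations $A_{xy}, P_{xy}, E_{xy}$. I would argue by descent on the length $|\sigma(abc)|$, peeling off the outermost (last-applied) production $g_m$. The governing combinatorial principle is an obstruction lemma: the production $A_{xy}$ (resp. $P_{xy}$) can occur as the outermost factor only if, in the generated word $W = \sigma(a)\sigma(b)\sigma(c)$, every occurrence of $x$ is immediately followed (resp. preceded) by $y$; otherwise the inverse $x \mapsto xy^{-1}$ (resp. $x \mapsto y^{-1}x$) fails to return a positive word. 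This is precisely the mechanism that disqualifies the Ionian major mode in the Phrygian proposition, where the simultaneous presence of the factors $ab$ and $ba$ blocks every production on the pair $\{a,b\}$, and the analogous co-existence blocks productions on $\{b,c\}$ and $\{a,c\}$.

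I would first record a matrix-level necessary condition: a positive automorphism has $|\det \tilde{M}_\sigma| = 1$, and a direct computation of the determinant of the matrix in Proposition \ref{Msigma} gives $\det \tilde{M}_\sigma = \det M_f$ when $|f(a)|_c$ is odd and $0$ when it is even (the two middle rows coincide). Hence morphicity forces $|f(a)|_c$ to be odd; combined with Proposition \ref{gstandard} and the parametrization of the conjugacy classes of authentic PWWF substitutions by $\Sigma_2$ via the matrices $M_f$, this channels the analysis toward the candidate classes. The heart of the argument is then to show that the obstruction lemma, applied repeatedly, forces the peeling to proceed exactly along the pattern $P_{ba}^kP_{ca}^k A_{ba}P_{ac}(P_{cb}P_{ca})^{n-1}P_{cb}E_{ab}$ recorded in Proposition \ref{morphiclist}, and that the descent terminates at the seed $a|b||c$ only when the apotomic projection $f = \pi_{b \to c}(\sigma)$ lies, up to conjugation and reversal, in the family $\mathcal{F} = \{G^kDG^{2n}\}$ --- equivalently when $f(a)$ contains exactly one letter $c$. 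By the reversal pairing noted in the Remark and the symmetry $\theta \circ rev \circ \theta = rev$, it suffices to treat one representative of each reversal-pair, which halves the case analysis.

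The step I expect to be the main obstacle --- and the reason this is stated as a conjecture rather than a theorem --- is the global claim that every PWWF mode whose apotomic projection is not in $\mathcal{F}$ (i.e. with $|f(a)|_c \ge 2$ for its standard representative) is non-morphic throughout its entire conjugacy class. Locally, when $|f(a)|_c \ge 2$ the bisection forces $\sigma(a)$ to contain both a $b$ and a $c$, which one expects to generate an irreducible obstruction of $xy/yx$ type somewhere in the cycle; but converting this expectation into a uniform proof requires showing that the descent necessarily stalls at a non-seed irreducible triple for every conjugate, in the manner that $\gamma_{0,n}$ stalls the good segment in Proposition \ref{morphiclist}. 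Establishing this irreducibility uniformly over all non-family standard morphisms $f$ --- rather than class by class --- is the genuinely hard part, and is where a careful structural description of the bisected words, or an induction transferring a factorization of $\sigma$ to factorizations of its projections $f$, $g$, $\tilde{g}$ into the two-letter generators $G, D, \tilde{G}, \tilde{D}$, would be needed.
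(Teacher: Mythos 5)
You were asked to prove a statement that the paper itself does not prove: it is stated explicitly as a \emph{conjecture}, supported only by the evidence of Propositions \ref{Msigma} and \ref{morphiclist} and the portrait of the Phrygian-minor conjugation class. So there is no paper proof to compare against, and the question is whether your proposal closes the conjecture. It does not, and you concede this yourself: the ``only if'' direction --- that every morphic authentic PWWF substitution must, up to letter permutation, generate a mode in the morphic segment of Proposition \ref{morphiclist} --- is exactly the open content, and your plan leaves its decisive step (uniform irreducibility of the stalled triples for all conjugacy classes whose apotomic projection lies outside $\mathcal{F}$, together with non-morphicity of the good and bad conjugates inside the family classes) as an acknowledged obstacle. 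Identifying where a complete argument would have to work is valuable, but it is not a proof, and your text should not present the descent along $P_{ba}^kP_{ca}^kA_{ba}P_{ac}(P_{cb}P_{ca})^{n-1}P_{cb}E_{ab}$ as ``forced'' when the forcing is precisely what remains unestablished.

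The pieces you do nail down are sound and align with the paper's own partial arguments. The ``if'' direction follows, as you say, from the explicit factorizations in the proof of Proposition \ref{morphiclist} composed with transpositions $E_{xy}$ (modulo the paper's own ``the calculations for the other morphic modes are analogous''). Your determinant observation is correct and is not recorded in the paper: writing $p=|f(a)|_a$, $q=|f(c)|_a$, $r=|f(a)|_c$, $s=|f(c)|_c$, the matrix $\tilde{M}_\sigma$ of Proposition \ref{Msigma} has identical middle rows when $r$ is even (determinant $0$), while for $r$ odd one computes $\det \tilde{M}_\sigma = ps-qr = \det M_f$; since a positive automorphism of $F_3$ abelianizes to an element of $GL_3(\mathbb{Z})$, morphicity does force $|f(a)|_c$ odd. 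Your ``obstruction lemma'' is the same stripping mechanism the paper uses to exclude productions in the proofs of the Phrygian proposition and of Proposition \ref{morphiclist}, but you state it imprecisely: the condition must hold per block, i.e.\ each of $\sigma(a)$, $\sigma(b)$, $\sigma(c)$ separately must lie in the image of $A_{xy}$, so in particular no block may end in $x$ --- concatenating into $W=\sigma(a)\sigma(b)\sigma(c)$ hides exactly the boundary conditions (prefix ending in $c$, suffix ending in $b$) that the paper's exclusion of $A_{ba}$ and $A_{ca}$ relies on. Finally, note that oddness of $|f(a)|_c$ is strictly weaker than the conclusion you need ($|f(a)|_c = 1$, i.e.\ $f\in\mathcal{F}$ up to conjugation and reversal): ruling out morphic substitutions with $|f(a)|_c = 3, 5, \dots$, and showing that the peeling stalls for every good conjugate at an irreducible triple the way $\gamma_{0,n}$ stalls it, is the genuine gap --- the very reason the paper states this as a conjecture rather than a theorem.
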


\section{Conclusion} 
Since the rise of the triad in its role as a governing concept in the music of harmonic tonality, music theorists have been in a quandary as to how to adjust the immemorial diatonic scale so as to be compatible with the triad's new primacy. The interval of the major third challenges the status of the perfect fifth as a scale generator, and in the course of this competition it undermines the validity of other properties that are consequences of the fifth-generatedness of the diatonic scale, such as the well-formedness property. The concept of pairwise well-formedness offers a reconciliation, insofar as it implements the idea of a coexistence of three well-formed scale structures within one parent scale. The letter projections mediate between the competing interpretations of a fifth- and a third-generated scale. The present paper offers a transformational upgrade to that earlier basic insight. Following the pattern of the investigation of well-formed modes through automorphisms of the free group $F_2$ it clarifies the combinatorial behavior of all non-singular pairwise well-formed modes. For the concrete case of the Ionian major mode $ac|ba||cab$, it turns out that the underlying substitution is not an automorphism of the free group $F_3$, which is an exception in comparison to the Phrygian minor, Lydian major, Mixolydian major and Aeolian minor modes. This exceptional status corresponds to the musical fact that both, the species of major third $ac$ and $ca$ as well as those of the minor third $ba$ and $ab$ are different. The mathematical status of the Dorian minor mode as a bad mode has its musical counterpart in the fact that the species $cbac$ doesn't form a proper fifth. This fact in turn is reflected in the special treatment some nineteenth-century theorists accorded the supertonic ii harmony in major, as a sort of cousin to the diminished supertonic triad in minor.

\end{document}